\theoremstyle{plain}
\newtheorem{theorem}{Theorem}
\newtheorem{lemma}[theorem]{Lemma}
\newtheorem{corollary}[theorem]{Corollary}
\newtheorem{proposition}[theorem]{Proposition}
\theoremstyle{definition}
\newtheorem{remark}[theorem]{Remark}
\newenvironment{example}
{\medskip\noindent{\bf Example}}{\smallskip\par}
\newcommand{\Example}[2]{\begin{example}{{ \textbf{#1}.} #2}\end{example}}
\newcommand{\CC}{{\mathbb{C}}}
\newcommand{\PP}{{\mathbb{P}}}
\newcommand{\ZZ}{{\mathbb{Z}}}
\newcommand{\calO}{{\cal O}}
\newcommand{\calT}{{\cal T}}
\newcommand{\DDD}{D}
\newcommand{\E}[2]{E^{{#1}}_{{#2}}}
\newcommand{\Hom}{\mathrm{Hom}}
\newcommand{\Ext}{\mathrm{Ext}}
\newcommand{\TTT}{\mathsf{T}\!}
\newcommand{\chern}{\mathrm{ch}}
\newcommand{\lla}{$\longleftarrow$}
\newcommand{\includegraph}[1]{ \includegraphics[width=0.7\textwidth,trim=0 0 2mm 2mm,clip]{#1} }
\title{A geometric construction of Coxeter-Dynkin diagrams of bimodal singularities}
\author{Wolfgang Ebeling and David Ploog
\date{}
\thanks{Supported by the DFG priority program SPP 1388 ``Representation Theory'' (Eb 102/6--1).\newline
Keywords: Coxeter-Dynkin diagram, singularity, mirror symmetry, triangulated category.\newline
AMS Math.\ Subject Classification (2010): 32S25, 18E30, 53D37.
}
}
\begin{document}

\maketitle

\begin{abstract} We consider the Berglund-H\"ubsch transpose of a bimodal invertible polynomial and construct a triangulated category associated to the compactification of a suitable deformation of the singularity. This is done in such a way that the corresponding Grothendieck group with the (negative) Euler form can be described by a graph which corresponds to the Coxeter-Dynkin diagram with respect to a distinguished basis of vanishing cycles of the bimodal singularity.
\end{abstract}

\section*{Introduction}
Let $f(x,y,z)$ be a weighted homogeneous polynomial which has an isolated singularity at the origin $0 \in \CC^3$. An important invariant of $f$ is a Coxeter-Dynkin diagram with respect to a distinguished basis of vanishing cycles in the Milnor fibre of $f$. It determines the monodromy of the singularity as the corresponding Coxeter element. The vanishing cycles can be chosen to be (graded) Lagrangian submanifolds of the Milnor fibre. A distinguished basis of such vanishing Lagrangian cycles can be categorified to an $A_\infty$-category ${\rm Fuk}^\to(f)$ called the directed Fukaya category of $f$. Its derived category $D^b{\rm Fuk}^\to(f)$ is, as a triangulated category, an invariant of the polynomial $f$.

On the other hand, one can consider the bounded derived category of coherent sheaves on a resolution of the singularity or of a compactification of the Milnor fibre as in \cite{EP}. The homological mirror symmetry conjecture states that there should be a relation between these categories for mirror symmetric singularities.

In \cite{ET}, the first author and A.~Takahashi considered a mirror symmetry in a specific class of weighted homogeneous polynomials in three variables, namely the so called invertible polynomials. The mirror symmetry is given by the Berglund--H\"ubsch transpose $f^T$ of $f$. They generalised Arnold's strange duality for the 14 exceptional unimodal singularities to this wider class. They defined Dolgachev and Gabrielov numbers for such invertible polynomials and showed that the Dolgachev numbers of $f$ coincide with the Gabrielov numbers of $f^T$ and the Gabrielov numbers of $f$ coincide with the Dolgachev numbers of $f^T$.

In the case of the 14 exceptional unimodal singularities, the Gabrielov numbers are directly related with a Coxeter-Dynkin diagram of the singularity. In \cite{EP}, it was shown that one can find a Coxeter-Dynkin diagram of the dual singularity in the bounded derived category of coherent sheaves on a resolution of the compactification of the Milnor fibre of $f$.

In this paper, we consider the bimodal singularities. They were also classified by V.~I.~Arnold. They fall into 8 infinite series starting with 6 classes where, setting one modulus equal to 0, one obtains weighted homogeneous polynomials. Besides these series, there are again 14 exceptional singularities. In these 6+14 classes one finds invertible polynomials. Coxeter-Dynkin diagrams for the bimodal singularities were computed in \cite{E83}. In this paper, we shall show that these Coxeter-Dynkin diagrams can be constructed geometrically in a way similar to \cite{EP} using suitable invertible polynomials and their Berglund-H\"ubsch transposes.

We would like to thank K.~Ueda for pointing out to us that some of the functions $F$ in Table~\ref{TabWPS} in the published paper were not quasismooth. This concerns the cases $Z_{18}$, $Q_{16}$, $S_{1,0}$ and $S_{16}$. The corresponding entries of Table~\ref{TabWPS} have now been corrected. This does not affect the results of the paper as it is shown for Example $S_{16}$.
\section{Invertible polynomials}
Let $f(x_1, \ldots , x_n)$ be a weighted homogeneous complex polynomial. This means that there are positive integers $w_1,\dots ,w_n$ and $d$ such that
\[ f(\lambda^{w_1} x_1, \dots, \lambda^{w_n} x_n) = \lambda^d f(x_1,\dots ,x_n) \]
for $\lambda \in \CC^\ast$. We call $(w_1,\dots ,w_n;d)$ a system of {\em weights}.
The weight system is said to be \emph{reduced} if $\text{gcd}(w_1,\dots ,w_n,d)=1$; otherwise it is
called \emph{non-reduced}.
Recall that a quasihomogeneous polynomial $f(x_1, \ldots , x_n)$ in $n$ variables is called {\em invertible} if it is of the form
\[
 f(x_1, \ldots, x_n)=\sum\limits_{i=1}^n a_i \prod\limits_{j=1}^n x_j^{E_{ij}}
\]
for some coefficients $a_i\in\CC^\ast$ and for a matrix $E=(E_{ij})$ with non-negative integer entries and with $\det E\ne 0$. For simplicity we can assume $a_i=1$ for $i=1, \ldots, n$. (This can be achieved by a suitable rescaling of the variables.) An invertible quasihomogeneous polynomial $f$ is called {\em non-degenerate} if it has (at most) an isolated critical point at the origin in $\CC^n$. An invertible polynomial has a {\em canonical system of weights}: This is the system of weights $W_f=
(w_1,\dots ,w_n;d')$ given by the unique solution of the equation
\begin{equation*}
E
\begin{pmatrix}
w_1\\
\vdots\\
w_n
\end{pmatrix}
={\rm det}(E)
\begin{pmatrix}
1\\
\vdots\\
1
\end{pmatrix}
,\quad
d':={\rm det}(E).
\end{equation*}
We may and will assume that $w_1$, \dots , $w_n$ and $d'$ are positive integers.
This system of weights is in general non-reduced. Define
\[ c_f:= {\rm gcd}(w_1, \dots  ,w_n,d'). \]
Let
\[ (q_1, \ldots, q_n; d) := (w_1/c_f, \ldots , w_n/c_f; d'/c_f) \]
be the corresponding reduced weight system. We define the \emph{Berglund-H\"{u}bsch transpose}
$f^T(x_1,\dots ,x_n)$ of an invertible polynomial $f(x_1,\dots ,x_n)$ by
\[
f^T(x_1,\dots ,x_n):=\sum_{i=1}^na_i\prod_{j=1}^nx_j^{E_{ji}}.
\]

\section{Weighted homogeneous bimodal singularities} \label{Sect:bi}
The bimodal singularities have been classified by Arnold \cite{ARuss, AInv}. They are characterised by the fact that the exceptional divisor of the minimal resolution is a Kodaira degenerate elliptic curve of type ${\rm I}_p^\ast$, $p \geq 0$, ${\rm  IV}^\ast$, ${\rm III}^\ast$, or ${\rm II}^\ast$ with a different neighbourhood \cite{K75, EW85}. In the classes ${\rm I}_0^\ast$, ${\rm  IV}^\ast$, ${\rm III}^\ast$, and ${\rm II}^\ast$ one can find weighted homogeneous polynomials. The list of classes with the names given by Arnold and their deformations is given in Table~\ref{TabKodaira}. We also indicate the number $r$ of components of the exceptional divisor with a self-intersection number different from $-2$. 

The 6 singularities of Kodaira type ${\rm I}_0^\ast$ are referred to as \emph{quadrilateral singularities} since they correspond to certain quadrangles in the hyperbolic plane in the same way as the 14 exceptional unimodal singularities correspond to triangles in the hyperbolic plane \cite{Dolgachev75}. The remaining singularities of Kodaira types ${\rm  IV}^\ast$, ${\rm III}^\ast$, ${\rm II}^\ast$ are called \emph{exceptional}.
\begin{table}[h]
\centering
\begin{tabular}{c @{\hspace{3em}} *{7}{c} }
\toprule
$r$ & ${\rm I}_0^\ast$ &&${\rm  IV}^\ast$ && ${\rm III}^\ast$ && ${\rm II}^\ast$  \\
\midrule
1 & $J_{3,0}$ & \lla & $E_{18}$ & \lla & $E_{19}$ & \lla & $E_{20}$ \\
1 & $Z_{1,0}$ & \lla & $Z_{17}$ & \lla & $Z_{18}$ & \lla & $Z_{19}$ \\
1 & $Q_{2,0}$ & \lla & $Q_{16}$ & \lla & $Q_{17}$ & \lla & $Q_{18}$ \\
\midrule[0.1pt]
2 & $W_{1,0}$ & \lla & $W_{17}$ & \lla & $W_{18}$ \\
2 & $S_{1,0}$ & \lla & $S_{16}$ & \lla & $S_{17}$ \\
\midrule[0.1pt]
3 & $U_{1,0}$ & \lla & $U_{16}$ \\
\bottomrule
\end{tabular}
\caption{Weighted homogeneous bimodal singularities} \label{TabKodaira}
\end{table}

In fact, in each of these classes one can find non-degenerate invertible polynomials in three variables.  
In Table~\ref{TabBi} there are chosen invertible polynomials for these classes and in each case, the 
Berglund-H\"ubsch transpose is indicated. Since the Berglund-H\"ubsch transpose will be our main concern,
we shall denote it by $f$ and the invertible polynomial for the bimodal singularity by $f^T$. We also 
indicate the Dolgachev numbers $\alpha_1,\alpha_2, \alpha_3$ and Gabrielov numbers $\gamma_1$, $\gamma_2$,
$\gamma_3$ for $f$ as defined in \cite{ET}. They are the Gabrielov numbers and Dolgachev numbers of the 
polynomial $f^T$ respectively by \cite{ET}. Note that these numbers depend on the polynomial $f$ and, in
general,  they differ from the Dolgachev numbers of the singularity in \cite{Dolgachev75}.
\begin{table}[h]
\centering
\begin{tabular}{cccccc}
\toprule
Name & $\gamma_1, \gamma_2, \gamma_3$  & $f^T$   & $f$ & $\alpha_1,\alpha_2, \alpha_3$ & Dual \\
\cmidrule(r){1-3} \cmidrule(l){4-6}
$J_{3,0}$ & $2,4,6$ & $x^6y+y^3+z^2$ & $x^6+xy^3+z^2$ & $2,3,10$ & $Z_{13}$ \\
$Z_{1,0}$ & $2,4,8$ & $x^5y + xy^3 +z^2$ & $x^5y + xy^3 +z^2$ & $2,4,8$ & $Z_{1,0}$ \\
$Q_{2,0}$ & $2,4,10$ & $x^4y + y^3 + xz^2$ & $x^4z + xy^3 +z^2$ & $3,3,7$ & $Z_{17}$\\
$W_{1,0}$ & $2,6,6$ & $x^6+y^2+yz^2$  & $x^6+y^2z+z^2$ & $2,6,6$ & $W_{1,0}$ \\
$S_{1,0}$ & $2,6,8$ & $x^5+xy^2+yz^2$ & $x^5y+y^2z+z^2$ & $3,5,5$ & $W_{17}$ \\
$U_{1,0}$ & $3,4,6$ & $x^3+xy^2+yz^3$  & $x^3y+y^2z+z^3$ & $3,4,6$ & $U_{1,0}$ \\
\cmidrule[0.1pt](r){1-3} \cmidrule[0.1pt](l){4-6}
$E_{18}$ & $3, 3, 5$ & $x^5z + y^3 + z^2$  & $x^5 + y^3 + xz^2$ & $2,3,12$ & $Q_{12}$ \\
$E_{19}$ & $2, 4, 7$ & $x^7y + y^3+z^2$  & $x^7 + xy^3+z^2$  & $2,3,12$ & $Z_{1,0}$  \\
$E_{20}$ & $2, 3, 11$ & $x^{11} +y^3 +z^2$ & $x^{11} +y^3 +z^2$ & $2, 3, 11$ & $E_{20}$ \\
\cmidrule[0.1pt](r){1-3} \cmidrule[0.1pt](l){4-6}
$Z_{17}$ & $3, 3, 7$ & $x^4z + xy^3 + z^2$ & $x^4y + y^3 + xz^2$ & $2,4,10$ & $Q_{2,0}$  \\
$Z_{18}$ & $2, 4, 10$ & $x^6y + xy^3 + z^2$ & $x^6y + xy^3 + z^2$ & $2, 4, 10$ & $Z_{18}$  \\
$Z_{19}$ & $2, 3, 16$ & $x^9 + xy^3 + z^2$  & $x^9y + y^3 + z^2$ & $2, 4, 9$ & $E_{25}$\\
\cmidrule[0.1pt](r){1-3} \cmidrule[0.1pt](l){4-6}
$Q_{16}$ & $3, 3, 9$ & $x^4z + y^3 + xz^2$ & $x^4z + y^3 + xz^2$ & $3, 3, 9$ & $Q_{16}$ \\
$Q_{17}$ & $2, 4, 13$ & $x^5y+y^3+xz^2$ & $x^5z+xy^3+z^2$  & $3,3,9$  & $Z_{2,0}$  \\
$Q_{18}$ & $2, 3, 21$ & $x^8+y^3+xz^2$ & $x^8z+y^3+z^2$ & $3, 3, 8$ & $E_{30}$  \\
\cmidrule[0.1pt](r){1-3} \cmidrule[0.1pt](l){4-6}
$W_{17}$ & $3, 5, 5$ & $x^5z+yz^2+y^2$ & $x^5+xz^2+y^2z$ & $2,6,8$ & $S_{1,0}$  \\
$W_{18}$ & $2, 7, 7$ & $x^7+ y^2+ yz^2$ & $x^7+ y^2z+ z^2$ & $2, 7, 7$ & $W_{18}$  \\
\cmidrule[0.1pt](r){1-3} \cmidrule[0.1pt](l){4-6}
$S_{16}$ & $3, 5, 7$ & $x^4y+xz^2+y^2z$ & $x^4y+xz^2+y^2z$ & $3, 5, 7$ & $S_{16}$  \\
$S_{17}$ & $2, 7, 10$ & $x^6+xy^2+yz^2$ & $x^6y+y^2z+z^2$ & $3,6,6$ & $X_{2,0}$  \\
\cmidrule[0.1pt](r){1-3} \cmidrule[0.1pt](l){4-6}
$U_{16}$ & $5, 5, 5$ & $x^5+y^2z+yz^2$ & $x^5+y^2z+yz^2$ & $5, 5, 5$ & $U_{16}$  \\
\bottomrule
\end{tabular}
\caption{Strange duality of the bimodal singularities} \label{TabBi}
\end{table}

In each case, the invertible polynomial $f$ defines another singularity whose name (in Arnold's 
notation) is also given in the table. Note that we have chosen two invertible polynomials in the 
singularity class $Z_{1,0}$ whose Berglund-H\"ubsch transposes lie in different classes of bimodal
singularities, namely $Z_{1,0}$ and $E_{19}$.

Coxeter-Dynkin diagrams with respect to distinguished bases of vanishing cycles for these 
singularities were determined in \cite{E83}. By  a Coxeter-Dynkin diagram we mean the following graph.
Let $(L, \langle - , - \rangle)$ be an integral lattice, i.e.\  $L$ is a finitely generated free
$\ZZ$-module equipped with a symmetric bilinear form $\langle - , - \rangle$ with values in $\ZZ$.
An element $e \in L$ with $\langle e,e\rangle =-2$ is called a 
\emph{root}. Such an element $e$ defines a
reflection
\[ s_e(x) = x- \frac{2\langle x, e \rangle}{\langle e,e \rangle}
          = x + \langle x,e \rangle e \text{ for } x \in L. \]
Let $B=(e_1, \ldots , e_n)$ be a basis of $L$ consisting of roots.
The symmetric bilinear form $\langle - , - \rangle$ with respect to this ordered basis is encoded by 
a graph, the so called \emph{Coxeter-Dynkin diagram} corresponding to the basis $B$, in the following way:
The vertices correspond to the basis elements $e_i$ and two vertices $e_i$ and $e_j$ with $i \neq j$ are 
joined by $|\langle e_i, e_j \rangle|$ edges which are dashed if $\langle e_i, e_j \rangle < 0$. 
The {\em Coxeter element} $\tau$ corresponding to $B$ is defined by
\[ \tau = s_{e_1} s_{e_2} \cdots s_{e_n}. \]

In the singularity case, we are interested in the Milnor lattice $L$ and a Coxeter-Dynkin diagram 
corresponding to a distinguished basis of vanishing cycles of the Milnor lattice. Then the Coxeter element corresponding to such a basis is the monodromy operator of the singularity.

According to \cite{E83}
(see also \cite{ET}), a Coxeter-Dynkin diagram with respect to a distinguished basis of vanishing cycles
of one of the bimodal singularities can be obtained by the following rule from the invariants of 
Table~\ref{TabCD}: Here $(\alpha_1, \alpha_2, \alpha_3)$ are the Dolgachev numbers of $f$.
The number $a$ is the \emph{Gorenstein parameter} of the canonical system of weights
$W_{f^T}=(w^T_1,w^T_2,w^T_3;d^T)$ of $f^T$, i.e.
\[ a:= d^T-w^T_1-w^T_2-w^T_3. \]
Let $T(\alpha_1,\alpha_2,\alpha_3)$ be the T-shaped graph of Figure~\ref{FigTpqr}.
\begin{figure}
\centering
\includegraphics[width=0.95\textwidth,trim=0 0 0 2mm,clip]{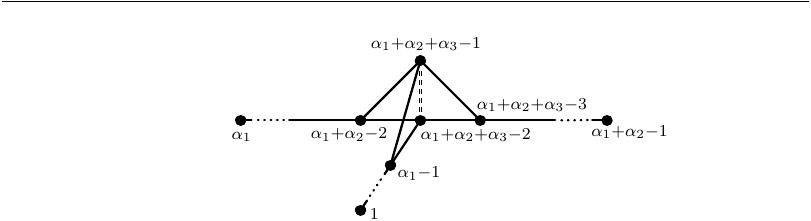}
\caption{The graph $T(\alpha_1, \alpha_2, \alpha_3)$} \label{FigTpqr}
\end{figure}

\begin{itemize}
\item If $a=2$ then  the diagram $T(\alpha_1,\alpha_2,\alpha_3)$ is extended by $\bullet_1 \mbox{---} \bullet_2$ where $\bullet_1$ is connected to the upper central vertex and $\bullet_2$ to the $\alpha_i - \beta_i-1$-th vertex from the outside of the $i$-th arm, unless $\beta_i=\alpha_i - 1$ ($i=1,2,3$).
\item If $a=3$ then  the diagram $T(\alpha_1,\alpha_2,\alpha_3)$ is extended by $\bullet_1 \mbox{---} \bullet_2 \mbox{---} \bullet_3$ where $\bullet_1$ is connected to the upper central vertex and $\bullet_3$ to the $\alpha_i - \beta_i-1$-th vertex from the outside of the $i$-th arm, unless $\beta_i=\alpha_i - 1$ ($i=1,2,3$).
\item If $a=5$ then  the diagram $T(\alpha_1,\alpha_2,\alpha_3)$
 is extended by $\bullet_1 \mbox{---} \bullet_2 \mbox{---} \bullet_3\mbox{---} \bullet_4\mbox{---} \bullet_5$
where $\bullet_1$ is connected to the upper central vertex and $\bullet_3$ to the $\alpha_i - \beta_i-1$-th vertex from the outside of the $i$-th arm, unless $\beta_i=\alpha_i - 1$ ($i=1,2,3$).
\end{itemize}
The numbering of the vertices of the complete graph is obtained by taking the new vertices as last vertices, in their indicated order.
\begin{table}[h]
\centering
\begin{tabular}{ccccc}
\toprule
Dual   & $c_f$   & $(\alpha_i,\beta_i), i=1,2,3$ & $a$ & Name \\
\midrule
$Z_{13}$   & 2 & $(2,1), (3,2), (10,7)$ & 2 & $J_{3,0}$ \\
$Z_{1,0}$ & 2  & $(2,1), (4,3), (8,5)$  & 2 & $Z_{1,0}$\\
$Z_{17}$  & 1 & $(3,2), (3,2), (7,4) $ & 2 & $Q_{2,0}$ \\
$W_{1,0}$ & 2 & $(2,1), (6,4), (6,4) $ & 2 & $W_{1,0}$ \\
$W_{17}$  & 1 & $(3,2), (5,3), (5,3) $ & 2 & $S_{1,0}$ \\
$U_{1,0}$ & 2 & $(3,1), (4,3), (6,4) $ & 2 & $U_{1,0}$ \\
\midrule[0.1pt]
$Q_{12}$ & 2 & $(2,1), (3,2), (12,8)$ & 2 & $E_{18}$\\
$Z_{1,0}$ & 3 & $(2,1), (3,2), (12,9)$ & 3 & $E_{19}$\\
$E_{20}$  & 1 & $(2,1), (3,2), (11,9)$ & 5 & $E_{20}$\\
\midrule[0.1pt]
$Q_{2,0}$ & 2 & $(2,1), (4,3), (10,6)$ & 2 & $Z_{17}$ \\
$Z_{18}$   & 1 & $(2,1), (4,3), (10,7)$ & 3 & $Z_{18}$\\
$E_{25}$   & 1 & $(2,1), (4,3), (9,7)$  & 5 & $Z_{19}$\\
\midrule[0.1pt]
$Q_{16}$  & 1 & $(3,2), (3,2), (9,5)$ & 2 & $Q_{16}$\\
$Z_{2,0}$  & 3 & $(3,2), (3,2), (9,6)$ & 3 & $Q_{17}$\\
$E_{30}$   & 1 & $(3,2), (3,2), (8,6)$ & 5 & $Q_{18}$\\
\midrule[0.1pt]
$S_{1,0}$  & 2 & $(2,1), (6,4), (8,5)$ & 2 & $W_{17}$\\
$W_{18}$  & 1 & $(2,1), (7,5), (7,5)$ & 3 & $W_{18}$\\
\midrule[0.1pt]
$S_{16}$   & 1 & $(3,2), (5,3), (7,4)$ & 2 & $S_{16}$\\
$X_{2,0}$  & 3 & $(3,2),(6,4), (6,4)$  & 3 & $S_{17}$\\
\midrule[0.1pt]
$U_{16}$   & 1 & $(5,3), (5,3), (5,3)$ & 2 & $U_{16}$\\
\bottomrule
\end{tabular}
\caption{Invariants of the singularities} \label{TabCD}
\end{table}

Note that in the cases where the canonical systems of weights of $f$ are reduced ($c_f=1$), the numbers $\beta_i$ of Table~\ref{TabCD} satisfy $a \beta_i \equiv  1 \, {\rm mod} \, \alpha_i$, $i=1,2,3$. Therefore, in these cases,  the invariants $(\alpha_1,\beta_1), (\alpha_2,\beta_2), (\alpha_3,\beta_3)$ are just the orbit invariants of the $\CC^\ast$-action on the corresponding singularity, by \cite{Dolgachev83}.

\section{Deformations and compactifications}
Our aim is to realize such a Coxeter-Dynkin diagram in a geometric way using the resolution of the compactification of a suitable deformation of the singularity $f(x,y,z)$ dual to the given singularity.

We consider one of the invertible polynomials $f(x,y,z)$ of Table~\ref{TabBi}. Let $(q_1,q_2,q_3;d)$ be the reduced weight system of $f$. We consider a suitable deformation $f_w$ of $f$ and a compactification of the level set $f_w=0$ in a weighted projective 3-space. Let
\[ q_0:= d-q_1-q_2-q_3 \]
and consider the weighted projective space $\PP(Q)=\PP(q_0,q_1,q_2,q_3)$ with homogeneous coordinates $(w:x:y:z)$ (cf.\ \cite{Dolgachev82}). In this weighted projective space, we consider the quasismooth (i.e.\ the affine cone is smooth outside the vertex) hypersurface
\[ Z:=\{(w:x:y:z) \in \PP(q_0,q_1,q_2,q_3) \, | \, F(w,x,y,z)=0 \}, \]
where
\[F(w,x,y,z) = f(x,y,z) + w^{d/q_0}\]
in the case of the quadrilateral singularities and one of
\[
F(w,x,y,z) = \left\{ \begin{array}{l}
                        f(x,y,z) +zw^{(d-q_3)/q_0} \\
                        f(x,y,z) +yw^{(d-q_2)/q_0} \\
                        f(x,y,z) +xw^{(d-q_1)/q_0} \\
                        f(x,y,z) +zw^{(d-q_3)/q_0} + yw^{(d-q_2)/q_0} \\
                        f(x,y,z) +yw^{(d-q_2)/q_0} + xw^{(d-q_1)/q_0}
                     \end{array} \right.
\]
in the case of the 14 exceptional bimodal singularities. See Table~\ref{TabWPS} for the actual choice of deformation and compactification.

By \cite[3.3.4 Theorem]{Dolgachev82}, $Z$ is a simply connected projective surface with trivial dualizing sheaf $\omega_Z= \calO_Z$. Let $c:=c_f$. If the canonical system of weights is reduced, we set $Y:=Z$. Otherwise, we consider an action of the cyclic group $\ZZ_c=\ZZ/c\ZZ$ on $\PP(q_0,q_1,q_2,q_3)$, where a generator $\zeta \in \ZZ_c$ acts as follows
\[ (w:x:y:z) \mapsto (\zeta^{m_0} w: \zeta^{m_1} x: \zeta^{m_2}y: \zeta^{m_3}z)\]
and the corresponding quadruples $(m_0,m_1,m_2,m_3) \in \ZZ^4$ are indicated in Table~\ref{TabWPS}. This action leaves the surface $Z$ invariant. In these cases let $Y:=Z/\ZZ_c$ be the quotient variety.

\begin{proposition} \label{PropK3}
The variety $Y$ is a simply-connected projective surface with the dualizing sheaf $\omega_Y=\calO_Y$.
\end{proposition}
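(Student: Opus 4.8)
\emph{Proof proposal.} If the canonical weight system of $f$ is reduced, then $c_f=1$ and $Y=Z$, so the statement is precisely \cite[3.3.4 Theorem]{Dolgachev82}. Assume therefore $c:=c_f>1$, so that $Y=Z/\ZZ_c$. The whole argument hinges on one point: that the $\ZZ_c$-action on $Z$ fixes a generator of the one-dimensional space $H^0(Z,\omega_Z)$. I would exhibit such a generator by a Griffiths--Poincar\'e residue. Writing $\mathrm{Eu}=q_0 w\,\partial_w+q_1 x\,\partial_x+q_2 y\,\partial_y+q_3 z\,\partial_z$ for the Euler vector field of the weights and $\Omega=\iota_{\mathrm{Eu}}(dw\wedge dx\wedge dy\wedge dz)$, the form $\Omega$ is weighted homogeneous of degree $q_0+q_1+q_2+q_3=d=\deg F$, hence $\Omega/F$ has degree $0$ and $\sigma:=\mathrm{Res}_Z(\Omega/F)$ is a nowhere-vanishing holomorphic $2$-form generating $H^0(Z,\omega_Z)$. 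Under a generator $\zeta$ of $\ZZ_c$ one has $\Omega\mapsto\zeta^{\,m_0+m_1+m_2+m_3}\Omega$, while $F\mapsto\zeta^{\,\ell}F$ for the residue class $\ell$ modulo $c$ making $F$ a $\zeta$-semi-invariant (such an $\ell$ exists exactly because the action preserves $Z$).

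The first step is then the congruence $m_0+m_1+m_2+m_3\equiv\ell\pmod c$. This is the one genuinely computational point, and I would verify it line by line from Table~\ref{TabWPS}; conceptually it says that the auxiliary cyclic actions were chosen to be ``symplectic''. Granting it, $\Omega/F$, and therefore $\sigma$, is $\ZZ_c$-invariant.

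Next I would deduce $\omega_Y=\calO_Y$. Let $p\in Z$ be fixed by some nontrivial $\zeta^{j}$ with stabiliser $H\subseteq\ZZ_c$. Evaluating the identity $(\zeta^{j})^{*}\sigma=\sigma$ at $p$ forces $\det$ of the linear action of $H$ on the tangent space of $Z$ at $p$ (or on that of the local smooth cover, if $p$ is a singular point of $Z$) to be $1$; so $H$ contains no pseudo-reflection, no nontrivial element of $\ZZ_c$ fixes a curve, and the quotient map $\pi\colon Z\to Y$ is \'etale in codimension one. By purity of the branch locus $\pi$ is then \'etale over the regular locus $Y_{\mathrm{sm}}$, on which $\ZZ_c$ acts freely; hence $Y$ has only quotient singularities, and the invariant nowhere-zero form $\sigma$ descends to a nowhere-vanishing regular $2$-form on $Y_{\mathrm{sm}}$. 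Since $Y$ is a normal surface, $\omega_Y=j_{*}\!\left(\omega_Y|_{Y_{\mathrm{sm}}}\right)=\calO_Y$.

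Finally, for simple connectivity I would pass to cohomology of the structure sheaf. From $H^{i}(Y,\calO_Y)=H^{i}(Z,\calO_Z)^{\ZZ_c}$ one gets $h^{0}(\calO_Y)=1$, $h^{1}(\calO_Y)=0$ (as $Z$ is simply connected with rational singularities, so $h^{1}(\calO_Z)=0$), and $h^{2}(\calO_Y)=1$ (by Serre duality $H^{2}(\calO_Z)\cong H^{0}(\omega_Z)^{\vee}$, with trivial $\ZZ_c$-action by the first step); hence $\chi(\calO_Y)=2$. Since $Y$ is klt with $K_Y\sim 0$, its minimal resolution $\widetilde Y$ has Kodaira dimension $\le 0$ and $\chi(\calO_{\widetilde Y})=2$, so by the Enriques--Kodaira classification $\widetilde Y$ is a blow-up of a K3 surface, in particular simply connected; therefore $Y$, a proper birational image of $\widetilde Y$, is simply connected. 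The main obstacle is the congruence $m_0+m_1+m_2+m_3\equiv\ell\pmod c$ of the first step: everything downstream is formal, but the construction only works because the quadruples in Table~\ref{TabWPS} are chosen to preserve the holomorphic $2$-form, and checking this means going through the list.
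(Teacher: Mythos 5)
Your proposal is correct, and its computational core coincides with the paper's: the residue form $\mathrm{Res}_Z(\Omega/F)$ you write down is exactly the generator $\omega_0=\bigl(q_0w\,dxdydz-q_1x\,dwdydz+q_2y\,dwdxdz-q_3z\,dwdxdy\bigr)/dF$ that the paper uses, and in both arguments the whole point is the $\ZZ_c$-invariance of this form, i.e.\ the congruence $m_0+m_1+m_2+m_3\equiv\ell\pmod c$ checked against Table~\ref{TabWPS} (in every listed case one in fact has $\ell=0$ and $\sum m_i=0$). You go beyond the paper in making explicit the descent: no quasi-reflections, \'etaleness in codimension one, extension of the form over the resulting A-type (hence canonical Gorenstein) quotient singularities --- steps the paper leaves implicit in the phrase ``it is easy to see''. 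Where you genuinely diverge is simple connectivity: the paper disposes of it in one sentence, implicitly using the topological fact (Armstrong) that the quotient of a simply connected surface by a finite group acting with fixed points is again simply connected, which applies here since $c\in\{2,3\}$ is prime and the action is not free; you instead compute $\chi(\calO_Y)=2$ from $H^i(Y,\calO_Y)=H^i(Z,\calO_Z)^{\ZZ_c}$, combine it with $\omega_Y=\calO_Y$ and the canonicity of the singularities to identify the minimal resolution as a K3 via Enriques--Kodaira, and then use surjectivity of $\pi_1$ under a resolution with connected fibres. Both routes are sound: the paper's is shorter but leans on the unspoken topological input, while yours is longer, self-contained, and as a by-product identifies $X$ as a K3 surface, which the paper only records afterwards as a consequence of Proposition~\ref{PropK3}.
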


\begin{proof} Since the surface $Z$ is simply connected, it is clear that the surface $Y$ is still simply connected. Since $\omega_Z=\calO_Z$, the space of holomorphic 2-forms on $Z$  is generated by the holomorphic 2-form
\[ \omega_0 := \frac{q_0wdxdydz -q_1xdwdydz +q_2ydwdxdz -q_3zdwdxdy}{dF} \]
(cf.\ \cite{Saito87}). It is easy to see that this 2-form is invariant under the action of $\ZZ_c$.
\end{proof}

The singularities of $Y$ are cyclic quotient singularities. Let $\pi\colon X \to Y$ be a minimal resolution of its singularities.
By Proposition~\ref{PropK3}, $X$ is a smooth K3 surface. We summarise the relation between the three surfaces:
\[ \xymatrix{
X \phantom{XXXX} \ar@<-1.8em>[d]_(0.45){\text{resolution}}^(0.45)\pi
             & {}\save[]+<1cm,0cm> *\txt{smooth K3 surface} \restore \\
Y = Z/\ZZ_c  \\
Z = V(F)         \ar@<1.8em>[u]^(0.45){\text{covering}}
             & {}\save[]+<3cm,0cm> *\txt{hypersurface in weighted projective space; \\
                                         compactification of Berglund-H\"ubsch dual \\
                                         of a bimodal singularity~~~~~~~~~~~~~~~~~~~~~~~~~} \restore
} \]

\begin{table}[h]
\centering
\begin{tabular}{cllcc}
\toprule
Dual & \multicolumn{1}{c}{$F(w,x,y,z)$}  & $\PP(q_0,q_1,q_2,q_3)$  & $c$ & $(m_0,m_1,m_2,m_3)$ \\
\midrule
$Z_{13}$ & $x^6+xy^3+z^2+w^{18}$ & $\PP(1,3,5,9)$  & $2$ & $(0,1,-1,0)$  \\
$Z_{1,0}$ & $x^5y + xy^3 +z^2+w^{14}$ & $\PP(1,2,4,7)$  & $2$ &$(0,1,-1,0)$ \\
$Z_{17}$ & $x^4z + xy^3 +z^2+w^{12}$ &  $\PP(2,3,7,12)$ & $1$ &  \\
$W_{1,0}$ & $x^6+y^2z+z^2+w^{12}$ & $\PP(1,2,3,6)$  & $2$ & $(0,1,-1,0)$  \\
$W_{17}$  & $x^5y+y^2z+ z^2+w^{10}$ & $\PP(2,3,5,10)$ & $1$ & \\
$U_{1,0}$  & $x^3y+y^2z+z^3+w^9$ & $\PP(1,2,3,3)$ & $2$ &$(0,1,-1,0)$ \\
\midrule[0.1pt]
$Q_{12}$ & $x^5 + y^3 + xz^2+zw^9$ & $\PP(1,3,5,6)$  & $2$ & $(1,0,0,-1)$\\
$Z_{1,0}$ &  $x^7 + xy^3+z^2+yw^{10}$  & $\PP(1,2,4,7)$ & $3$ &$(1,0,-1,0)$ \\
$E_{20}$ & $x^{11} +y^3 +z^2+xw^{12}$ & $\PP(5,6,22,33)$ & $1$ & \\
\midrule[0.1pt]
$Q_{2,0}$ &  $x^4y + y^3 + xz^2+zw^7$ & $\PP(1,2,4,5)$ & $2$ & $(0,1,-1,0)$  \\
$Z_{18}$  & $x^6y + xy^3 + z^2+yw^8+xw^{10}$ & $\PP(3,4,10,17)$ & $1$ & \\
$E_{25}$  & $x^9y + y^3 + z^2+xw^{10}$ & $\PP(5,4,18,27)$ & $1$ &\\
\midrule[0.1pt]
$Q_{16}$ & $x^4z + y^3 + xz^2+zw^6+yw^7$ & $\PP(2,3,7,9)$ & $1$ & \\
$Z_{2,0}$  & $x^5z+xy^3+z^2+yw^7$  & $\PP(1,1,3,5)$   & $3$ & $(1,-1,-1,1)$ \\
$E_{30}$  & $x^8z+y^3+z^2+xw^9$ & $\PP(5,3,16,24)$ & $1$  & \\
\midrule[0.1pt]
$S_{1,0}$  & $x^5+xz^2+y^2z+zw^6+yw^7$ & $\PP(1,2,3,4)$   & $2$ & $(0,1,0,-1)$ \\
$W_{18}$  & $x^7+ y^2z+ z^2+xw^8$ & $\PP(3,4,7,14)$  & $1$ & \\
\midrule[0.1pt]
$S_{16}$  & $x^4y+xz^2+y^2z+zw^5+yw^6$ & $\PP(2,3,5,7)$ & $1$ & \\
$X_{2,0}$ & $x^6y+y^2z+z^2+xw^7$ & $\PP(1,1,2,4)$  & $3$ & $(1,-1,0,0)$ \\
\midrule[0.1pt]
$U_{16}$  & $x^5+y^2z+yz^2+xw^6$ & $\PP(2,3,5,5)$  & $1$ & \\
\bottomrule
\end{tabular}
\caption{Compactifications in weighted projective spaces \protect \footnotemark
} \label{TabWPS}
\end{table}

\section{Configuration of rational curves on $X$}
We want to study configurations of rational curves on $X$. Start by considering the curves
\[ \begin{array}{l @{\quad\text{and}\quad} l @{\quad\text{in }} l}
 C_\infty := (\{ w=0 \}\cap Z)/\ZZ_c  & C_0 := (\{ x=0 \}\cap Z)/\ZZ_c & Y, \\
 E_\infty := \pi^{-1}(C_\infty)         & E_0 := \pi^{-1}(C_0)           & X.
\end{array} \]

\footnotetext{The functions $F$ in this table differ from the published version for $Z_{18}$, $Q_{16}$, $S_{1,0}$ and $S_{16}$.}

\begin{proposition} \label{PropC0}
The curves $C_0$ and $C_\infty$ are rational curves on $Y$.
\end{proposition}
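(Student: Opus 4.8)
The plan is to describe both curves completely explicitly by restricting the defining polynomial $F$ of $Z$ to the coordinate hyperplanes $\{w=0\}$ and $\{x=0\}$ of $\PP(q_0,q_1,q_2,q_3)$, to bound the geometric genus of the resulting curves, and then to exploit the $\ZZ_c$-action to push the genus down to $0$ where necessary. (Here ``rational curve'' means a reduced curve each of whose irreducible components has geometric genus $0$.) Two elementary facts are used repeatedly: the image of a smooth curve of genus $g$ under a non-constant morphism has geometric genus $\le g$, so the image of $\PP^1$ and any quotient of $\PP^1$ by a finite group is rational; and an involution of an elliptic curve having a fixed point has rational quotient.

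For $C_\infty$, all monomials of $F$ involving $w$ vanish on $\{w=0\}\cong\PP(q_1,q_2,q_3)$, so $\{w=0\}\cap Z=V(f)\subset\PP(q_1,q_2,q_3)$, which is irreducible and reduced because $f$ has an isolated singularity. This is the base of the good $\CC^*$-action on the quasihomogeneous surface singularity $\{f=0\}\subset\CC^3$, i.e.\ the central curve of its star-shaped minimal good resolution. In each row of Table~\ref{TabBi} the singularity $\{f=0\}$ is the one named in the column ``Dual''; for all of these --- the exceptional unimodal ones $Z_{13}$ and $Q_{12}$ as well as the bimodal ones and the remaining quasihomogeneous singularities in that column --- the exceptional divisor of the minimal resolution is a configuration of rational curves (for the bimodal ones one of the Kodaira-degenerate elliptic configurations recalled in the Introduction), so the central curve $V(f)$ has geometric genus $0$. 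Hence $C_\infty=V(f)/\ZZ_c$, being the image of a rational curve, is a rational curve on $Y$. (Alternatively one verifies this directly for the $21$ polynomials $f$: solving $f=0$ for the variable occurring to highest power exhibits $V(f)$ as a cyclic cover of a weighted projective line ramified over few orbifold points, and Riemann--Hurwitz gives geometric genus $0$.)

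For $C_0$, note first that $\{x=0\}\not\subset Z$ --- this would force $x\mid F$, impossible since $f(0,y,z)\not\equiv0$ in every case --- so that $\{x=0\}\cap Z$ is a curve; on $\{x=0\}\cong\PP(q_0,q_2,q_3)$ the polynomial $F$ restricts to $f(0,y,z)$ plus the chosen $w$-monomial, which vanishes precisely when that monomial is the one divisible by $x$. Running through the list, the restricted equation always has one of the following shapes: it factors into ``weighted lines'' $\{z=\zeta w^k\}$, $\{y=\zeta w^k\}$ or $\{y=0\}$, $\{z=0\}$, $\{y+z=0\}$, each isomorphic to some $\PP(1,m)\cong\PP^1$; or it is linear in one of $y,z$ (as in $y^3+zw^k$ or $z^2+yw^k$), hence defines an irreducible rational curve; or it is of cuspidal type such as $y^3+z^2$; or --- and this occurs only when $c=c_f\ge2$ --- it is irreducible of geometric genus $1$, as for the equations $y^2z+z^3+w^{d/q_0}=0$ and $z(y^2+z)+w^{d/q_0}=0$ arising in the cases $U_{1,0}$ and $W_{1,0}$. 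In that last situation one exhibits a $\ZZ_c$-fixed point lying on the curve at which the curve is smooth --- the coordinate point $[0:0:1:0]$, which is always fixed by $\ZZ_c$ and lies on $Z$ exactly because $f$ has no monomial that is a pure power of $y$ --- so that the quotient of this elliptic curve by $\ZZ_c$ has geometric genus $0$. In every case $\{x=0\}\cap Z$ has rational components, whence $C_0=(\{x=0\}\cap Z)/\ZZ_c$ is a rational curve on $Y$.

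The main obstacle is the $C_0$ analysis: it is genuinely case-by-case, and the cyclic quotient singularities of $\PP(q_0,q_1,q_2,q_3)$, hence of $Y$, force one to keep track of reducedness and irreducibility of the coordinate sections, of which of them pass through which singular points, and of the precise shape of the $\ZZ_c$-action --- in particular in the four cases with $m_0\neq0$, where it mixes $w$ with the remaining coordinates. The treatment of $C_\infty$ is conceptually cleaner, but relies on the input, drawn from Arnold's classification rather than established here, that no singularity in the ``Dual'' column has a base curve of positive genus.
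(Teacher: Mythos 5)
Your argument for $C_\infty$ has a genuine gap, and it occurs precisely at the point where the paper's proof has to do real work. You assert that for every row of Table~\ref{TabBi} the singularity $\{f=0\}$ named in the ``Dual'' column has a minimal resolution whose exceptional divisor consists of rational curves, so that the central curve $V(f)=\{w=0\}\cap Z$ always has geometric genus $0$. This is false for the two rows $Q_{17}$ and $S_{17}$, whose duals are $Z_{2,0}$ and $X_{2,0}$: these are not bimodal (they are not covered by the Kodaira-type description recalled in the Introduction), and their central curves are elliptic. Concretely, for $f=x^5z+xy^3+z^2$ the curve $V(f)\subset\PP(1,3,5)$ is quasismooth and projects $2{:}1$ onto $\PP(1,3)\cong\PP^1$ (forgetting $z$) with branch divisor $x(x^9-4y^3)$, i.e.\ four branch points, so it has genus $1$; likewise $V(x^6y+y^2z+z^2)\subset\PP(1,2,4)$ is a double cover of $\PP(1,2)\cong\PP^1$ branched over the four zeros of $y(y^3-4x^6)$, again genus $1$. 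So the ``input drawn from Arnold's classification'' that you lean on in your closing paragraph is simply not true, and your alternative Riemann--Hurwitz remark would give genus $1$, not $0$, in these two cases. The paper's own proof is organized exactly around this point: it computes the genera via Dolgachev's formula for quasismooth weighted complete intersections, finds genus $0$ except for $Z_{2,0}$ and $X_{2,0}$, and in those two cases (both with $c_f=3$) shows that the generating holomorphic $1$-form $\omega_1$ on $\{w=0\}\cap Z$ is not $\ZZ_c$-invariant, so the quotient $C_\infty$ has genus $0$. Your proof contains no such argument for $C_\infty$; the elliptic-quotient device you do set up is deployed only for $C_0$ and only in the cases $U_{1,0}$ and $W_{1,0}$. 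The gap is repairable with your own tools (the $\ZZ_3$-action on these two elliptic curves is nontrivial and has fixed points, e.g.\ $(0{:}0{:}1{:}0)$ resp.\ $(0{:}0{:}1{:}0)$ and $(0{:}0{:}1{:}-1)$, so Riemann--Hurwitz forces the quotient to be rational), but as written the $C_\infty$ half of the proof fails in these two cases.

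A smaller defect sits in the $C_0$ casework. The shape $z(y^2+z)+w^{d/q_0}$, which you declare to be irreducible of genus $1$ and to occur only when $c\ge 2$, also occurs in the row dual to $W_{17}$ (the entry named $S_{1,0}$ in the ``Dual'' column of Table~\ref{TabWPS}), where $c=1$: there the curve is $\{y^2z+z^2+w^{10}=0\}\subset\PP(2,5,10)$, and because of the weights it is in fact rational (under $\PP(2,5,10)\cong\PP^2$, $(w{:}y{:}z)\mapsto(w^5{:}y^2{:}z)$, it becomes a smooth conic; equivalently the double cover of $\PP(2,5)\cong\PP^1$ is branched in only two points). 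So the conclusion survives, but your classification of shapes misses this case, which shows the ``running through the list'' was not carried out completely; the genus of these curves depends on the ambient weights and not only on the shape of the equation, which is exactly why the paper computes genera with Dolgachev's formula rather than by inspection of the defining polynomial.
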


\begin{proof} The curves $\{ w=0 \} \cap Z$ and $\{ x=0 \} \cap Z$ are quasismooth weighted complete intersections in $\PP(Q)$ of multidegree $(d,q_0)$ and $(d,q_1)$ respectively. According to \cite[3.4.4 Corollary]{Dolgachev82}, their genus is equal to zero except in the cases $Z_{2,0}$ and $X_{2,0}$ where it is equal to one. If the genus is already zero in $Z$, then also for the image curve in $Y$. For $Z_{2,0}$ and $X_{2,0}$, the form
\[ \omega_1 := \frac{q_1xdydz-q_2ydxdz+q_3zdxdy}{df} \]
is a holomorphic 1-form on $\{ w=0 \} \cap Z$ which generates the space of holomorphic 1-forms on this curve. However, it is not invariant with respect to the action of the group $\ZZ_c$. A similar argument holds for the curve $\{ x=0 \} \cap Z$.
\end{proof}

The surface $Y$ has three cyclic quotient singularities of type $(\alpha_i, \alpha_i-1)$ ($i=1,2,3$) along the curve  $C_\infty$. The curve $C_0$ intersects the curve $C_\infty$ in some of these singularities. In order to compute how the curve $E_0$ meets the exceptional divisor of the resolution $\pi\colon X \to Y$, we study the local setting around a cyclic quotient singularity.

\bigskip

\noindent
\textbf{Local setting:} We first consider $\CC^2$ with the coordinates $x,y$ and an action of 
the cyclic group $\ZZ_k$ by $(x,y) \mapsto (\zeta x, \zeta^{-1} y)$ where $\zeta$ is a generator
of $\ZZ_k$. The quotient $\CC^2/\ZZ_k$ defines a cyclic quotient singularity of type $(k,k-1)$. 
It is well known that its resolution is obtained as follows:
The polynomials $x^k$, $y^k$, $xy$ are invariant under $\ZZ_k$. The map
\[ \psi \colon \CC^2 \to \CC^3, \quad (x,y) \mapsto (X,Y,Z)=(x^k,y^k,xy) \]
factors through $\CC^2/\ZZ_k$ and the image of the induced map is the hypersurface
\[ \{(X,Y,Z) \in \CC^3 \, | \, XY=Z^k \}. \]

The resolution $M \to \CC^2/\ZZ_k$ is obtained by glueing $k$ copies of $\CC^2$
(with coordinates $(u_i,v_i)$, $i=1, \ldots , k$) by the maps
\[ \phi_i \colon \CC^2 \setminus \{ v_i=0 \} \to \CC^2 \setminus \{ v_{i+1} = 0\}, \quad
   (u_i,v_i) \mapsto \left(\frac{1}{v_i}, u_iv_i^2 \right)=(u_{i+1}, v_{i+1}). \]

Considering the singularity as a hypersurface, the resolution is given by the mapping
$\pi_0\colon M \to \CC^3$ in the coordinates $(u_i,v_i)$ with
\[ (u_i,v_i) \mapsto (X,Y,Z)=(u_i^iv_i^{i-1}, u_i^{k-i}v_i^{k+1-i},u_iv_i). \]

The exceptional divisor is
\[ E = \bigcup_{i=1}^{k-1} E_i, \quad E_i = \{ u_i=v_{i+1} = 0 \}, \quad i=1, \ldots, k-1.\]
We have $E_i \cap E_{i+1} \neq \varnothing$ for $i=1, \ldots, k-1$ and $E_i \cap E_j = \varnothing$
otherwise. The dual graph corresponding to the components $E_i$ is a graph of type $A_{k-1}$.
Note that the proper preimage
of the curve $y=0$ under the resolution $M \to \CC^2/\ZZ_k$ intersects (transversally) the component $E_1$ of the exceptional divisor.

\begin{lemma} \label{LemC2}
Let $0<m <k$ be an integer. In $\CC^2$ with coordinates $x,y$ consider the curve $x^m+y^{k-m}=0$. Then the proper preimage of this curve under the resolution $M \to \CC^2/\ZZ_k$  intersects (transversally) the component $E_{k-m}$ of the exceptional divisor.
\end{lemma}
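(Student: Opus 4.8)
Proof proposal.

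The plan is to push the curve $D := \{x^m + y^{k-m} = 0\}$ down to the quotient $\CC^2/\ZZ_k$, realise its image inside the hypersurface model $\{XY = Z^k\}$ furnished by the invariants $X = x^k$, $Y = y^k$, $Z = xy$, and then read off the strict transform directly from the explicit charts $(u_i,v_i)$ of the resolution $M$.

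First I would note that $D$ is $\ZZ_k$-invariant, since under $(x,y)\mapsto(\zeta x,\zeta^{-1}y)$ the polynomial $x^m+y^{k-m}$ is multiplied by $\zeta^m$; hence $D$ descends to a curve $\bar D := \psi(D)\subset\CC^2/\ZZ_k$. The computation at the heart of the argument is that on $D$ one has $y^{k-m}=-x^m$, and therefore
\[ Z^{k-m} = (xy)^{k-m} = x^{k-m}\cdot(-x^m) = -x^k = -X,\qquad Z^{m} = (xy)^{m} = (-y^{k-m})\cdot y^{m} = -y^k = -Y. \]
So $\psi(D)$ is contained in $\Gamma := \{(X,Y,Z)\mid X=-Z^{k-m},\ Y=-Z^m\}$; as the image of a curve under the finite map $\psi$ is again a curve and $\Gamma$ is irreducible, $\bar D=\Gamma$. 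Moreover $\Gamma$ is smooth — it is the graph $Z\mapsto(-Z^{k-m},-Z^m,Z)$ over the $Z$-line — so, the resolution $\pi_0\colon M\to\{XY=Z^k\}$ being an isomorphism away from the origin, the strict transform $T$ of $\bar D$ is isomorphic to $\Gamma$ via $\pi_0$; in particular $T$ is irreducible and meets the exceptional divisor $E$ in exactly one point.

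To locate that point I would pass to the chart $(u,v)=(u_i,v_i)$ with $i=k-m$, where $\pi_0$ is given by $(u,v)\mapsto(X,Y,Z)=(u^{k-m}v^{k-m-1},\,u^{m}v^{m+1},\,uv)$. Substituting $Z=uv$ into $X=-Z^{k-m}$ gives $u^{k-m}v^{k-m-1}=-u^{k-m}v^{k-m}$, so on the locus $uv\neq0$ lying over $\Gamma\setminus\{0\}$ one gets $v=-1$ (the equation $Y=-Z^m$ yields the same relation), whence $T=\{v=-1\}$ in this chart. As $E_{k-m}=\{u=0\}$ here, the line $T=\{v=-1\}$ meets $E_{k-m}$ in the single point $(u,v)=(0,-1)$, and transversally, since the tangent directions $\partial_u$ and $\partial_v$ span the tangent plane. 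This point lies on no other component: in chart $k-m$ the neighbouring component $E_{k-m-1}$ is $\{v=0\}$ and $E_{k-m+1}$ sits at $v=\infty$, both avoided by $v=-1$, and $T\cap E$ was already seen to consist of one point. The extreme cases $m=1$ and $m=k-1$ are harmless, as the index $k-m$ still lies in $\{1,\dots,k-1\}$.

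I expect the only genuine bookkeeping difficulty to be controlling the strict transform simultaneously in the $k$ charts of $M$; routing the argument through the hypersurface model $\{XY=Z^k\}$ and exploiting the smoothness of $\Gamma$ collapses this to a computation in the single chart $i=k-m$, after which the $A_{k-1}$-geometry makes both the transversality and the uniqueness of the intersection point immediate.
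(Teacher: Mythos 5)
Your proposal is correct and takes essentially the same route as the paper: you push the curve down via the invariants $(X,Y,Z)=(x^k,y^k,xy)$ and compute in the chart $(u_{k-m},v_{k-m})$, finding the strict transform $\{v_{k-m}=-1\}$ meeting $E_{k-m}=\{u_{k-m}=0\}$ transversally, which is exactly the paper's computation (there phrased as factoring the pullback of $Z^m+Y$ as $u_{k-m}^m v_{k-m}^m(1+v_{k-m})$). The additional bookkeeping you include --- using both equations $X=-Z^{k-m}$, $Y=-Z^m$ of the image curve and arguing that the strict transform meets the exceptional divisor in a single point --- goes slightly beyond what the lemma asserts but does not alter the argument.
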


\begin{proof} Under the map $\psi$, the curve $x^m+y^{k-m}=0$ is mapped to the curve $Z^m+Y=0$. In the coordinates $(u_{k-m},v_{k-m})$ the preimage of this curve looks as follows:
\[ u_{k-m}^m v_{k-m}^m + u_{k-m}^mv_{k-m}^{m+1} = u_{k-m}^mv_{k-m}^m(1+v_{k-m}). \qedhere \]
\end{proof}

\begin{lemma} \label{LemC2:2}
In $\CC^2$ with coordinates $x,y$ consider the curve $x^2+y^{2k-2}=0$. Then the proper preimage of this curve under the resolution $M \to \CC^2/\ZZ_k$ has two components which intersect (transversally) the component $E_{k-1}$ of the exceptional divisor in two distinct points.
\end{lemma}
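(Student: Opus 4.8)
The plan is to factor the curve over $\CC$ into its two smooth branches, push them down to the quotient, and then run the same kind of chart computation as in the proof of Lemma~\ref{LemC2}, this time in the chart $(u_{k-1},v_{k-1})$.

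Fix $\mu\in\CC$ with $\mu^2=-1$. Since $2k-2=2(k-1)$, one has $x^2+y^{2k-2}=(x-\mu y^{k-1})(x+\mu y^{k-1})$, so the given curve is the union of the two smooth curves $C_1=\{x=\mu y^{k-1}\}$ and $C_2=\{x=-\mu y^{k-1}\}$ in $\CC^2$, each a copy of $\AAA^1$ (parameter $y$) meeting only at the origin. A direct check shows $C_1$ and $C_2$ are $\ZZ_k$-invariant (using $\zeta^{-(k-1)}=\zeta$), so they descend to curves on $\CC^2/\ZZ_k=\{XY=Z^k\}$; composing the parametrisations with $\psi$ and writing $Y=y^k$ gives
\[ \overline C_1=\{(\mu^kY^{k-1},Y,\mu Y)\mid Y\in\CC\},\qquad \overline C_2=\{((-\mu)^kY^{k-1},Y,-\mu Y)\mid Y\in\CC\}, \]
two smooth rational curves, both passing through the singular point at $Y=0$.

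Next I would compute the proper transforms in the chart with coordinates $(u_{k-1},v_{k-1})$, where the formula for $\pi_0$ gives $(X,Y,Z)=(u_{k-1}^{k-1}v_{k-1}^{k-2},\,u_{k-1}v_{k-1}^2,\,u_{k-1}v_{k-1})$ and where $E_{k-1}=\{u_{k-1}=0\}$, $E_{k-2}=\{v_{k-1}=0\}$. Imposing the relation $Z=\mu Y$ that cuts out $\overline C_1$ (the second relation $X=\mu^k Y^{k-1}$ then holds automatically on the locus found) yields $u_{k-1}v_{k-1}(1-\mu v_{k-1})=0$; off the exceptional locus $u_{k-1}v_{k-1}\ne 0$ this forces $v_{k-1}=\mu^{-1}=-\mu$, with $u_{k-1}$ free. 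Tracking a point $(\mu^kY^{k-1},Y,\mu Y)$ with $Y\ne 0$ back through $\pi_0$ gives $(u_{k-1},v_{k-1})=(-Y,-\mu)$, so the proper transform of $\overline C_1$ is exactly the affine line $\{v_{k-1}=-\mu\}$, lying entirely in this chart; symmetrically that of $\overline C_2$ is $\{v_{k-1}=\mu\}$. Each of these lines meets $E_{k-1}=\{u_{k-1}=0\}$ transversally in the single point $(0,\mp\mu)$ (the two tangent directions are spanned by $\partial_{u_{k-1}}$ and $\partial_{v_{k-1}}$) and is disjoint from $E_{k-2}=\{v_{k-1}=0\}$, hence meets the whole exceptional divisor only there; the two points $(0,-\mu)$ and $(0,\mu)$ are distinct since $\mu\ne 0$. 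This produces the two components intersecting $E_{k-1}$ transversally in two distinct points.

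I do not expect a real obstacle: this is the reducible-curve analogue of Lemma~\ref{LemC2}, and the only point needing a moment's care is confirming that the proper transform is captured in full by the single chart $(u_{k-1},v_{k-1})$ and does not secretly meet one of the earlier components $E_j$ with $j<k-1$ — which is why I track the preimage of a general point of $\overline C_i$ explicitly rather than only pulling back the defining equations.
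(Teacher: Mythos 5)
Your proof is correct and takes essentially the same route as the paper: the paper maps the curve to $Z^2+Y^2=0$ on the quotient and pulls it back in the chart $(u_{k-1},v_{k-1})$, where it factors as $u_{k-1}^2v_{k-1}^2(1+v_{k-1}^2)$, so the strict transform is exactly your two lines $v_{k-1}=\pm\mu$ meeting $E_{k-1}=\{u_{k-1}=0\}$ transversally in two distinct points. Your branch-by-branch parametrisation and the verification that the strict transform stays in this single chart are just a more explicit packaging of the same computation, supplying a detail the paper leaves implicit.
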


\begin{proof} Under the map $\psi$, the curve $x^2+y^{2k-2}=0$ is mapped to the curve $Z^2+Y^2=0$. In the coordinates $(u_{k-1},v_{k-1})$ the preimage of this curve looks as follows:
\[ u_{k-1}^2 v_{k-1}^2 + u_{k-1}^2v_{k-1}^4 = u_{k-1}^2v_{k-1}^2(1+v_{k-1}^2). \qedhere\]
\end{proof}

\noindent
\textbf{Application:} 
We use these lemmas to compute the configurations of smooth rational curves on $X$.
A smooth rational curve on a K3 surface has self-intersection number $-2$ by the adjunction formula.
For the 6 quadrilateral singularities, all the singularities of $Y$ lie on the curve $C_\infty$.
For the 14 exceptional bimodal singularities, the surface $Y$ has an additional singularity $P_0=(1:0:0:0)$. This is a cyclic quotient singularity of type $(a,a-1)$ where $a$ is defined in Section~\ref{Sect:bi}. It also lies on the curve $C_0$.
In the case $a=5$, Lemma~\ref{LemC2} implies that the curve $E_0$ intersects one of the inner components of the exceptional divisor corresponding to this singularity whose dual graph is of type $A_4$.
It turns out that the configurations of rational curves can be described with the help of Table~\ref{TabCD} in a similar way as the Coxeter-Dynkin diagrams:

\begin{proposition} \label{prop:config}
Let $f(x,y,z)$ be one of the invertible polynomials of Table~\ref{TabBi} with invariants 
$(\alpha_1, \beta_1), (\alpha_2, \beta_2), (\alpha_3, \beta_3)$ and let $Y$ be the surface constructed above.
Then the total transform of the curve $C_\infty$ under the resolution $\pi\colon X\to Y$ is a tree of smooth
rational curves with the proper transform $E_\infty$ as central curve and three branches of lengths
$\alpha_1, \alpha_2, \alpha_3$.
\begin{itemize}
\item[{\rm (i)}] If $f^T$ defines a singularity of Kodaira type ${\rm I}_0^\ast$ with $r=1$, the curve $E_0$ has two connected components $E_0'$ and $E_0''$. These are smooth rational curves which intersect the outermost curve of the third branch and no other component of the exceptional divisor.
\item[{\rm (ii)}] Otherwise, the curve $E_0$ is smooth and rational (in particular, irreducible).
If $\beta_i=\alpha_i-1$, then the curve $E_0$ does not intersect any curve of the $i$-th branch. Otherwise, the curve $E_0$ intersects the $\alpha_i - \beta_i +1$-th outermost curve of the $i$-th branch. If $P_0$ lies on $Y$, then $E_0$ also intersects one component of the exceptional divisor of the resolution of this singularity.
\end{itemize}
\end{proposition}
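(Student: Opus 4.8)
The plan is to reduce everything to a local study around the singular points of $Y$, of which there are only finitely many --- three lying on $C_\infty$ and, in the exceptional classes, the extra point $P_0$ on $C_0$ --- and to resolve each of them by the explicit model recalled in the ``Local setting'', tracking the proper transforms of $C_\infty$ and $C_0$ by means of Lemmas~\ref{LemC2} and~\ref{LemC2:2}. Once the local pictures are in hand, the global assertions are immediate: by Proposition~\ref{PropK3} the surface $X$ is a K3 surface, so every smooth rational curve on it has self-intersection $-2$; the exceptional chains over distinct singular points of $Y$ are disjoint; and $E_\infty$ meets each of them at a single point, so the total transform of $C_\infty$ is forced to be a tree.

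First I would analyse $C_\infty$. Since $F$ restricts to $f(x,y,z)$ on $\{w=0\}$, the curve $\{w=0\}\cap Z$ is the quasismooth curve $\{f=0\}\subset\PP(q_1,q_2,q_3)$, which is smooth away from the singular locus of that weighted projective plane because $f$ has an isolated singularity at the origin; together with Proposition~\ref{PropC0} this makes $C_\infty$ a smooth rational curve on $Y$ whose singularities (as points of $Y$) can lie only at the coordinate points and along the coordinate lines of $\PP(q_1,q_2,q_3)$ met by $\{f=0\}$, and at the fixed points of the $\ZZ_c$-action on $Z$ lying over $C_\infty$. For each polynomial $f$ of Table~\ref{TabBi} one checks, using the data of Table~\ref{TabWPS}, that there are exactly three such points; at each of them, eliminating one variable from $F=0$ by the implicit function theorem presents $Z$ locally as $\CC^2/\ZZ_{k'}$, and composing this with the residual $\ZZ_c$-action exhibits $Y$ locally as a cyclic quotient singularity of type $(\alpha_i,\alpha_i-1)$, i.e.\ of type $A_{\alpha_i-1}$. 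Resolving, each contributes a chain of $\alpha_i-1$ smooth rational $(-2)$-curves, and by the explicit local resolution the proper transform $E_\infty$ of $C_\infty$ --- which (since $\partial_wF$ vanishes along $C_\infty$) is always one of the two coordinate axes in $\CC^2/\ZZ_{k'}$ --- meets the end component of each chain transversally; this produces the $T$-shaped tree with central curve $E_\infty$ and three arms of lengths $\alpha_1,\alpha_2,\alpha_3$.

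Next I would treat $C_0=(\{x=0\}\cap Z)/\ZZ_c$. Inspecting $F(w,0,y,z)$ for the polynomials in Table~\ref{TabBi}, this polynomial is irreducible except precisely in the three classes of Kodaira type ${\rm I}_0^\ast$ with $r=1$ --- namely $J_{3,0}$, $Z_{1,0}$ (self-dual model) and $Q_{2,0}$ --- where it has the form $z^2+w^{2e}$ and splits into two rational components $E_0'$, $E_0''$; this is exactly the dichotomy between cases~(i) and~(ii). In either case, near each of the three singular points of $Y$ on $C_\infty$ one has to decide whether $C_0$ passes through it --- it does so exactly when the coordinate $x$ vanishes there, which turns out to be equivalent to $\beta_i\ne\alpha_i-1$ --- and, if so, to bring the local equation of $C_0$, after the same elimination as for $C_\infty$, into one of the normal forms of Lemmas~\ref{LemC2} and~\ref{LemC2:2} (a coordinate axis when $x$ is not the eliminated variable, a curve $x^m+y^{\alpha_i-m}=0$, or $x^2+y^{2\alpha_i-2}=0$ in the split case on the third arm), with the exponent read off directly from the monomials of $f$. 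The two lemmas then locate the intersection point of the proper transform of $C_0$ with the $i$-th arm at the component prescribed in the statement --- the outermost one in case~(i), the $(\alpha_i-\beta_i+1)$-th from the outside in case~(ii) --- or show that $C_0$ misses that arm when $\beta_i=\alpha_i-1$. Finally, in the exceptional classes the extra point $P_0=(1{:}0{:}0{:}0)$ is handled the same way: after one elimination and composition with $\ZZ_c$ it is a cyclic quotient singularity of type $(a,a-1)$, and applying Lemma~\ref{LemC2} to the local form of $C_0$ there (again read off from $f$) shows that $E_0$ meets exactly one component of its $A_{a-1}$-chain, an inner one when $a=5$.

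The real work, and the only genuinely delicate part, is the bookkeeping in the two matching steps. First, one must verify in each case of Table~\ref{TabBi} that the composition of the residual weighted-projective group with the $\ZZ_c$-action yields a cyclic quotient singularity of type exactly $(\alpha_i,\alpha_i-1)$: this requires a careful choice of local coordinates, keeping track of which variable is eliminated and of the relevant weights modulo the order of the group, and recognising whether the responsible singular point is a coordinate point of $\PP(q_1,q_2,q_3)$, a point on one of its coordinate lines, or a new fixed point of the $\ZZ_c$-action. Second, one must match the exponent occurring in the local equation of $C_0$ with the invariant $\beta_i$ of Table~\ref{TabCD} (and pin down which component of the $A_{a-1}$-chain over $P_0$ is met in the $a=5$ case), so that the abstract position $\alpha_i-\beta_i+1$ coming from \cite{E83} is reproduced by Lemmas~\ref{LemC2} and~\ref{LemC2:2}. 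Both are finite, uniform-in-structure checks across Tables~\ref{TabBi}--\ref{TabCD}, but this is where all the arithmetic --- and the whole point of the construction --- lives.
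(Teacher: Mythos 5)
Your proposal follows essentially the same route as the paper: its proof is precisely a case-by-case check in local charts of the weighted projective space, identifying the cyclic quotient singularities of $Y$ and tracking the proper transforms of $C_\infty$ and $C_0$ via Lemma~\ref{LemC2} and Lemma~\ref{LemC2:2} (illustrated there only by the sample cases $Z_{1,0}$, $S_{16}$ and $E_{20}$), which is exactly the computation you outline. As in the paper, the substance is the finite verification across Tables~\ref{TabBi}--\ref{TabWPS} of the singularity types $(\alpha_i,\alpha_i-1)$, the splitting of $E_0$ in the three ${\rm I}_0^\ast$, $r=1$ cases, and the matching of the local exponents of $C_0$ with the invariants $\beta_i$, which you correctly single out as the remaining bookkeeping.
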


\begin{proof}
This is proved case by case using Lemma~\ref{LemC2} and Lemma~\ref{LemC2:2}.
We give some examples of this calculation.

\Example{$Z_{1,0}$}{Here, $F(w,x,y,z) = x^5y + xy^3 +z^2+w^{14}$ and
\[ Z:=\{(w:x:y:z) \in \PP(1,2,4,7) \, | \, F(w,x,y,z)=0 \} . \]
We first consider the chart
$U_1:= \{(w:x:y:z) \in \PP(1,2,4,7) \, | \, x=1 \}$.
Then $U_1= \CC^3/\ZZ_2$ where $\ZZ_2$ acts on $\CC^3$ by
$(w,y,z) \mapsto (-w, y, -z)$.
This action has 3 fixed points on
$Z_1:=\{(w,y,z) \in \CC^3 \, | \, F(w,1,y,z)=0 \}$,
namely
$P_1 = (0, \sqrt{-1},0)$, $P_1'=(0,-\sqrt{-1}, 0)$, and $P_2= (0,0,0)$.

Moreover, let
$U_2:= \{(w:x:y:z) \in \PP(1,2,4,7) \, | \, y=1 \}$.
Then $U_2= \CC^3/\ZZ_4$ where a generator $\zeta \in \ZZ_4$ acts on $\CC^3$ by
$(w,x,z) \mapsto (\zeta w, \zeta^2 x, \zeta^7 z)$.
The only fixed point on
$Z_2:=\{(w,x,z) \in \CC^3 \, | \, F(w,x,1,z)=0 \}$
is $P_3=(0,0,0)$. The surface
$Z_2=\{x^5 +x + z^2 + w^{14} = 0 \}$
is regular in $x$ and the $\ZZ_4$-action on the coordinates $(w,z)$ is given by $(w,z) \mapsto (\zeta w, \zeta^{-1} z)$. Therefore the surface $Z \cap U_2$ has an $A_3$ singularity in $P_3$.

We consider the action of the cyclic group $\ZZ_2$ on $\PP(1,2,4,7)$ given by
\[ (w:x:y:z) \mapsto (w: -x: -y: z).\]
Under this action, the two points $P_1$ and $P_1'$ are identified, $P_2$ gets a cyclic quotient singularity of type $(4,3)$, and $P_3$ becomes a cyclic quotient singularity of type $(8,7)$. The curve $\{ x = 0 \}$ only meets the point $P_3$. The singularity $P_3$ of $Y=Z/\ZZ_2$ is $Z_2/\ZZ_8$.
By Lemma~\ref{LemC2:2},  the proper preimage of the curve $C_0$ under the resolution $\pi\colon X \to Y$  consists of two components $E_0'$ and $E_0''$ which intersect (transversally) the component $E_7$ of the exceptional divisor $\pi^{-1}(P_3)$. Therefore we have the configuration depicted in Figure~\ref{FigZ10}.

\begin{figure}
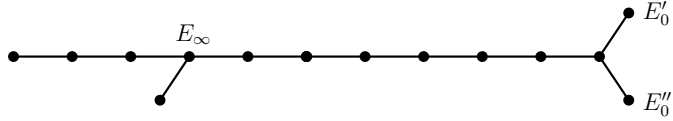

\centering
\includegraph{Z10}
\caption{  \label{FigZ10}
The configuration of rational curves in the case $Z_{1,0}$.}
\end{figure}

} 

\Example{$S_{16}$}{Here $F(w,x,y,z)=x^4y+xz^2+y^2z+zw^5+yw^6$ and 
\[ Y = Z :=\{(w:x:y:z) \in \PP(2,3,5,7) \, | \, F(w,x,y,z)=0 \} . \]
The surface $Y$ is quasismooth; note that this would be wrong without the term $yw^6$ because the
affine cone $x^4y+xz^2+y^2z+zw^5=0$ has  singularities along the curve $x=z=y^2+w^5=0$.\footnote{We thank Kazushi Ueda for this observation.}
This surface has 4 singularities, namely $P_0=(1:0:0:0)$ of type $A_1$, $P_1=(0:1:0:0)$ of type $A_2$, $P_2=(0:0:1:0)$ of type $A_4$, and finally $P_3=(0:0:0:1)$ of type $A_6$. The curve $\{x = 0\}$ goes through the points $P_0$, $P_2$, and $P_3$. One can easily see that it intersects the curve $\{ w=0\}$ transversally in the point $P_2$. To compute the intersection behaviour with the curve $\{w=0\}$ at the point $P_3$, consider the chart $U_3:=\{(w:x:y:z) \in \PP(2,3,5,7) \, | \, z=1 \}$. In this chart, $Y$ is given by the equation $x^4y+x+y^2+w^5+yw^6=0$. By an analytic change of the coordinates $(y,w)$ fixing the point $(y,w)=(0,0)$, one can get rid of the extra term $yw^6$. Then Lemma~\ref{LemC2} implies that the curve $E_0$ intersects the component $E_5$ of the exceptional divisor of the $A_6$ singularity $P_3$. Therefore we obtain the configuration depicted in Figure~\ref{FigS16}.

\begin{figure}
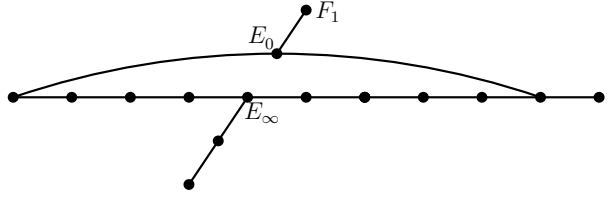

\centering
\includegraph{S16}
\caption{  \label{FigS16}
The configuration of rational curves in the case $S_{16}$.}
\end{figure}

} 

\Example{$E_{20}$}{In this case $F(w,x,y,z) = x^{11} +y^3 +z^2+xw^{12}$ and
\[ Y = Z :=\{(w:x:y:z) \in \PP(5,6,22,33) \, | \, F(w,x,y,z)=0 \} . \]
We first consider the chart
$U_1:= \{(w:x:y:z) \in \PP(5,6,22,33) \, | \, x=1 \}$.
Then $U_1= \CC^3/\ZZ_6$  where a generator $\zeta \in \ZZ_6$ acts on $\CC^3$ by
$(w,y,z) \mapsto (\zeta^5w, \zeta^{22}y, \zeta^{66}z)$. The singularity $(0:1:0:0) \in U_1$ of the weighted projective space $\PP(5,6,22,33)$ does not lie on $Z$, but the invariant surface $Z_1:=\{Ê(w,y,z) \in \CC^3 \, | \, F(w,1,y,z)=0 \}$ has two points with non-trivial isotropy group, namely $P_1=(0,-1,0)$ with isotropy group of order 2 and $P_2=(0,0,\sqrt{-1})$ with isotropy group of order 3. They yield singularities of type $A_1$ and $A_2$ respectively. A similar reasoning for the chart
$U_2:= \{(w:x:y:z) \in \PP(5,6,22,33) \, | \, y=1 \}$ shows that the surface $Z$ has a third singularity $P_3=(0:0:1:\sqrt{-1})$ of type $A_{10}$. In this chart, $Z$ is given by
$Z_2:=\{Ê(w,x,z) \in \CC^3 \, | \, x^{11} +1 +z^2+xw^{12}=0 \}$. In local coordinates $(\xi_0, \xi_1, \xi_3)=(w, x, z-\sqrt{-1})$ around $P_3$, where $P_3$ becomes the origin, the equation of $Z_2$ is given by $\xi_1^{11} + \xi_3^2 + 2 \sqrt{-1} \xi_3 + \xi_1\xi_0^{12}=0$. This shows that the curve $\{ x=0 \}$ intersects the curve $\{ w=0 \}$ in $P_3$ transversally. Therefore the proper preimages of these curves under the resolution $\pi\colon X \to Y$ intersect the first and the last component of the exceptional divisor $\pi^{-1}(P_3)$ respectively.

Now the surface $Y$ has an additional singularity $P_0=(1:0:0:0)$. Consider the corresponding chart
$U_0:= \{(w:x:y:z) \in \PP(5,6,22,33) \, | \, w=1 \}$. Then $U_0= \CC^3/\ZZ_5$ where a generator $\zeta \in \ZZ_5$ acts on $\CC^3$ by $(x,y,z) \mapsto (\zeta^6 x, \zeta^{22} y, \zeta^{33} z)$. Therefore $P_0$ is an $A_4$ singularity. The curve $\{ x=0 \}$ in this chart is given by $y^3+z^2$. It follows from Lemma~\ref{LemC2} that the proper preimage of this curve under the resolution $\pi\colon X \to Y$ intersects the component $E_2$ of the exceptional divisor of $\pi^{-1}(P_0)$. Therefore we obtain the configuration depicted in Figure~\ref{FigE20}. \qedhere

\begin{figure}
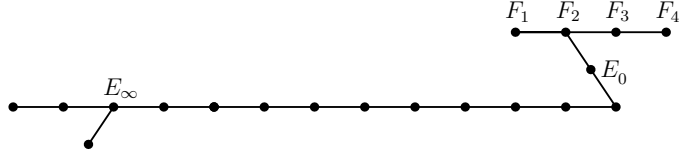

\centering
\includegraph{E20}
\caption{ \label{FigE20}
The configuration of rational curves in the case $E_{20}$.}
\end{figure}

} 
\end{proof}

\section{Categories and Coxeter-Dynkin diagrams}
We have seen that the dual graphs of the curve configurations which we have constructed in the previous section are very similar to parts of the corresponding Coxeter-Dynkin diagrams of the bimodal singularities. We now want to realize the precise diagrams as
Coxeter-Dynkin diagrams corresponding to certain sets of generators in triangulated categories associated to the above curve configurations. Let us note right away that the construction is geometric: we are providing a collection of sheaves on $X$. In the end, we will come up with a category whose associated lattice from K-theory coincides with the Milnor lattice of the corresponding singularity.

All our categories will be built in the following way: Starting with a K3 surface
$X$ and a configuration of smooth rational $-2$-curves, we will consider the 
smallest triangulated subcategory $\calT$ of the bounded derived category $\DDD^b(X)$
(of coherent sheaves) which is generated by the structure sheaf $\calO_X$ and line 
bundles supported on $-2$-curves. In certain cases, we have to apply a base change by
way of a spherical twist.

Regarding the Ext groups of those sheaves, the relevant facts are collected in the following statement,
where we make use of the complex $\Hom^\bullet(A,B)=\Hom(A,B)\oplus\Ext^1(A,B)[-1]\oplus\Ext^2(A,B)[-2]$
for sheaves $A,B$ on $X$ (this is a complex with zero differentials, so can be seen as a graded vector space).

\begin{lemma} \label{LemSphericals}
Let $X$ be a K3 surface and $C,D\subset X$ be two smooth rational $-2$-curves. Then 
$\Hom^\bullet(\calO_X,\calO_C)=\CC$, $\Hom^\bullet(\calO_X,\calO_C(-1))=0$,
$\Hom^\bullet(\calO_C,\calO_C(-1))=\CC^2[-2]$. Furthermore, if $C$ and $D$ intersect transversally then
$\Hom^\bullet(\calO_C(i),\calO_D(j))=\CC[-1]$ for any $i,j\in\ZZ$, whereas
$\Hom^\bullet(\calO_C(i),\calO_D(j))=0$ if $C$ and $D$ are disjoint.
\end{lemma}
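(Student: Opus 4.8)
The plan is to express every $\Hom^\bullet$ in the statement through the cohomology of line bundles on $\PP^1$ or on $X$. Two facts are used throughout. A smooth rational $-2$-curve $C\subset X$ is isomorphic to $\PP^1$, and by adjunction together with $\omega_X=\calO_X$ its normal bundle is $N_{C/X}=\calO_C(C)\cong\calO_{\PP^1}(-2)$. Moreover, a closed embedding $i\colon C\hookrightarrow X$ has exact pushforward, so for any line bundle $\calL$ on $C$ one has $\Hom^\bullet(\calO_X,i_*\calL)=R\Gamma(X,i_*\calL)=R\Gamma(\PP^1,\calL)$. Taking $\calL=\calO_C$ gives $R\Gamma(\PP^1,\calO_{\PP^1})=\CC$, and taking $\calL=\calO_C(-1)$ gives $R\Gamma(\PP^1,\calO_{\PP^1}(-1))=0$; these are the first two assertions.

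For $\Hom^\bullet(\calO_C,\calO_C(-1))$ I would apply $R\Hom_X(-,\calO_C(-1))$ to the structure triangle $\calO_X(-C)\to\calO_X\to\calO_C\to\calO_X(-C)[1]$. The middle term $R\Hom_X(\calO_X,\calO_C(-1))$ vanishes by the previous step, so the triangle gives $R\Hom_X(\calO_C,\calO_C(-1))\cong R\Hom_X(\calO_X(-C),\calO_C(-1))[-1]$. Since $\calO_X(-C)$ is invertible, the right-hand side equals $R\Gamma\bigl(X,\calO_X(C)\otimes\calO_C(-1)\bigr)[-1]$, and by the projection formula $\calO_X(C)\otimes\calO_C(-1)=i_*\bigl(\calO_C(C)\otimes\calO_C(-1)\bigr)\cong i_*\calO_{\PP^1}(-3)$, using $\calO_C(C)=N_{C/X}\cong\calO_{\PP^1}(-2)$. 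As $R\Gamma(\PP^1,\calO_{\PP^1}(-3))=\CC^2[-1]$, the shift produces $\CC^2[-2]$, as claimed. Serre duality on the K3 surface gives a cross-check: $\Ext^k_X(\calO_C,\calO_C(-1))\cong\Ext^{2-k}_X(\calO_C,\calO_C(1))^\ast$, and the same method computes the latter ($\CC^2$ for $k=2$, zero otherwise).

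For the two remaining assertions I would localise. The sheaves $\mathcal{E}xt^k_X(\calO_C(i),\calO_D(j))$ are supported on $C\cap D$, so when $C$ and $D$ are disjoint they vanish and $\Hom^\bullet(\calO_C(i),\calO_D(j))=0$. When $C$ and $D$ meet transversally --- which, in the configurations of Proposition~\ref{prop:config}, means in a single reduced point $p$ --- these sheaves are supported at $p$, the local-to-global Ext spectral sequence collapses, and $\Ext^k_X(\calO_C(i),\calO_D(j))\cong\bigl(\mathcal{E}xt^k_X(\calO_C(i),\calO_D(j))\bigr)_p$. In local coordinates at $p$ with $C=V(x)$ and $D=V(y)$ the line bundles are trivial, so this stalk is $\Ext^\bullet_{\calO_{X,p}}\bigl(\calO_{X,p}/(x),\,\calO_{X,p}/(y)\bigr)$; applying $\Hom_{\calO_{X,p}}(-,\calO_{X,p}/(y))$ to the Koszul resolution $0\to\calO_{X,p}\xrightarrow{\;\cdot x\;}\calO_{X,p}\to\calO_{X,p}/(x)\to 0$ computes it to be $\calO_{X,p}/(x,y)\cong\CC$ concentrated in degree $1$. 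Hence $\Hom^\bullet(\calO_C(i),\calO_D(j))=\CC[-1]$ for all $i,j$. The computations are routine; the only points requiring care are the shift bookkeeping in the case $\Hom^\bullet(\calO_C,\calO_C(-1))$, and the use, in the transversal case, of the geometric input that the intersection is a single reduced point --- this is what makes the answer one-dimensional rather than of rank $\#(C\cap D)$.
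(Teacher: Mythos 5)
Your proof is correct. Note that the paper gives no argument for Lemma~\ref{LemSphericals} at all -- it records these Ext computations as standard facts (they are the usual spherical-object calculations, as in \cite{EP}) -- so there is no ``paper proof'' to compare against; your write-up supplies a valid verification. The individual steps check out: $R\Gamma(\PP^1,\calO)=\CC$ and $R\Gamma(\PP^1,\calO(-1))=0$ give the first two claims; applying $R\Hom(-,\calO_C(-1))$ to the ideal-sheaf triangle $\calO_X(-C)\to\calO_X\to\calO_C$ and using $N_{C/X}\cong\calO_{\PP^1}(-2)$ together with $R\Gamma(\PP^1,\calO(-3))=\CC^2[-1]$ correctly yields $\CC^2[-2]$ (equivalently one could use $Li^*i_*\calO_C$ and its two cohomology sheaves $\calO_C$, $\calO_C(2)[1]$ -- same computation); and the local-to-global argument with the Koszul resolution of $\calO_{X,p}/(x)$ gives $\CC[-1]$ in the transversal case and $0$ in the disjoint case. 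You also rightly flag the one genuinely delicate point: as literally stated, the transversal case of the lemma presupposes that $C\cap D$ is a single reduced point (otherwise one gets $\CC^{\#(C\cap D)}[-1]$), which indeed holds for all pairs of curves in the tree-shaped configurations of Proposition~\ref{prop:config}. Two cosmetic remarks: your Serre-duality cross-check should read $\Ext^k(\calO_C,\calO_C(-1))\cong\Ext^{2-k}(\calO_C(-1),\calO_C)^\ast$, after which the identification with $\Ext^{2-k}(\calO_C,\calO_C(1))^\ast$ uses that twisting both arguments by the same line bundle on $C$ leaves the Ext groups unchanged; and in the local computation one should say that a local equation $x$ of $C$ and $y$ of $D$ at $p$ form a regular system of parameters precisely because of transversality, which is what makes $\calO_{X,p}/(x,y)$ the residue field $\CC$. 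Neither affects the correctness of the argument.
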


Since the canonical bundle of $X$ is trivial, the Serre functor of $\DDD^b(X)$ is just the shift $[2]$, and the same is then true for $\calT$. Such a category is often called a `2-Calabi-Yau category'. 
This implies that the Grothendieck K-group $K(\calT)$, equipped with the negative Euler pairing
\[ -\chi([A],[B])=-\sum_{i \in \ZZ}(-1)^i\dim\Hom_{\calT}(A,B[i]),
\] 
is a lattice. 
What is more, $\calT$ will be generated by spherical objects, i.e.\ objects $S\in\calT$ with $\Hom^\bullet(S,S)=\CC\oplus\CC[-2]$. Such objects give rise to roots $[S] \in K(\calT)$. The structure sheaf $\calO_X$ is spherical --- this is just rephrasing the fact that $X$ is a K3 surface. It is well-known that a line bundle on a chain of $-2$-curves is spherical. And as is standard by now, a spherical object $S$ gives rise to an autoequivalence $\TTT_S$ of the category, the \emph{spherical twist} associated to $S$. Since $\calT$ is 2-Calabi-Yau, the autoequivalence $\TTT_S$ descends to the reflection of $(K(\calT), - \chi(-,-))$ induced by the root $[S]$. 

According to Proposition~\ref{prop:config}, the surface $X$ comes with a star-like configuration of $-2$-curves, given by $\pi^{-1}(C_\infty)$. This graph has three arms of lengths $\alpha_1$, $\alpha_2$, $\alpha_3$. We denote the corresponding curves by $\E{i}{j}$ where $i=1,2,3$ and $j=1,\dots,\alpha_i-1$, starting at the outer ends. The central vertex corresponds to the curve $\E{}{\infty}$, it meets the curves $\E{i}{\alpha_i-1}$. Furthermore, there is always the curve $\E{}{0}$, as the strict transform of $C_0$; in three cases it decomposes into two components $E_0'$ and $E_0''$. For the 14 exceptional singularities, there are additional $-2$-curves from resolving the cyclic quotient singularity $P_0$; we call them $F_\ell$.

The situation is simplest for the singularities dual to the bimodal singularities with $a=2$ except those of Kodaira type ${\rm I}_0^\ast$ with $r=1$ ($J_{3,0}$, $Z_{1,0}$, $Q_{2,0}$). In this case, the curve configuration consists of the central curve $\E{}{\infty}$, the three arms $\E{i}{j}$ and the additional curve $\E{}{0}$. In the case of the exceptional bimodal singularities, we have an additional curve $F_1$ coming from the singularity $P_0$. This will not be used. We define the category $\calT$ as the smallest triangulated subcategory of $\DDD^b(X)$ containing the following objects:

\begin{description*}
\item[Case $a=2$] \hfill ($W_{1,0}, S_{1,0}, U_{1,0}, E_{18}, Z_{17}, Q_{16}, W_{17}, S_{16}, U_{16}$)
\begin{align*}
 \calT = \big\langle
      &  \calO_{\E{1}{1}}(-1), \ldots, \calO_{\E{1}{\alpha_1-1}}(-1),
         \calO_{\E{2}{1}}(-1), \ldots, \calO_{\E{2}{\alpha_2-1}}(-1), \\
      &  \calO_{\E{3}{1}}(-1), \ldots, \calO_{\E{3}{\alpha_3-1}}(-1),
         \calO_{\E{}{\infty}}(-1), \calO_{\E{}{\infty}}, \calO_X, \calO_{\E{}{0}}
         \big\rangle
\end{align*}
\end{description*}

\begin{figure}
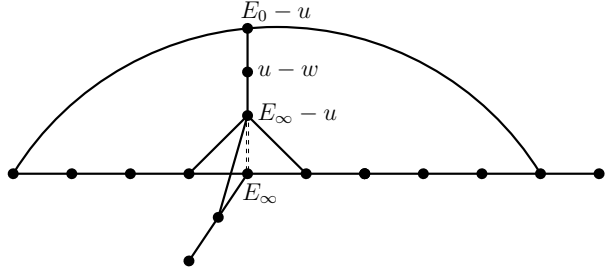

\centering
\includegraph{S16full}
\caption{Coxeter-Dynkin diagram for $S_{16}$.}  \label{FigS16full}
\end{figure}

The K-group of this category is the lattice spanned by the curves of $\pi^{-1}(C_\infty)$
and $E_0$, extended by a hyperbolic plane with a basis of isotropic elements $u$ and $w$;
we use the roots $\E{}{\infty}-u$ and $u-w$ as generators.
See Figure~\ref{FigS16full} for the singularity $S_{16}$. The lattice $K(\calT)$ can be
seen as a sublattice of the cohomology $H^*(X,\ZZ)$ equipped with the Mukai pairing. In
this picture, $-u$ is the class of a skyscraper sheaf (of length 1) on the curve 
$\E{}{\infty}$, i.e.\ an element of $H^4(X,\ZZ)$. The isotropic element $-w$ corresponds
to the ideal sheaf of this point, so that $\chern(\calO_X)=u-w$. The correspondence 
between sheaves and lattice elements is furnished by the Chern character
 $\chern\colon \calT\hookrightarrow\DDD^b(X)\to H^*(X,\ZZ)$.
Then $\chern(\calO_{\E{}{\infty}}(-1))=E_\infty$ in $H^2(X,\ZZ)$ and similar for the
other curves. Furthermore, $\chern(\calO_{\E{}{\infty}})=\E{}{\infty}-u$.
For details see \cite{EP}.

In the case $a=3$, we use in addition the curve $F_1$ of the exceptional divisor of
the $A_2$ singularity $P_0$, but not $F_2$:
\begin{description*}
\item[Case $a=3$] \hfill ($E_{19}, Z_{18}, Q_{17}, W_{18}, S_{17}$)
\begin{align*}
 \calT = \big\langle
       & \calO_{E^1_1}(-1), \ldots, \calO_{E^1_{\alpha_1-1}}(-1),
         \calO_{E^2_1}(-1), \ldots, \calO_{E^2_{\alpha_2-1}}(-1), \\
       & \calO_{E^3_1}(-1), \ldots, \calO_{E^3_{\alpha_3-1}}(-1),
         \calO_{E_\infty}(-1), \calO_{E_\infty}, \calO_X, \calO_{F_1}(-1), \calO_{E_0}
         \big\rangle
\end{align*}
\end{description*}

The remaining cases are the singularities dual to the bimodal singularities with $a=2$
of Kodaira type ${\rm I}_0^\ast$ with $r=1$ ($J_{3,0}$, $Z_{1,0}$, $Q_{2,0}$)  and the 
three singularities with $a=5$ of Kodaira type ${\rm II}^\ast$. In each case, there is
one curve inside the third branch of the curve configuration which we have to omit.

To this end, we will apply a suitable base change. Let us denote the superfluous
curve momentarily by $B$. For the sake of simplicity, we assume that $B$ is
incident to just two other smooth rational curves $A$ and $C$. The base change
we are after is $[C]\mapsto[B]+[C]$; note that this is the reflection along the root
$B$ applied to $C$. Omitting the curve $B$ leaves us with a chain one vertex shorter,
as desired:

\begin{center}
\includegraph{basechange}
\end{center}


On the categorical level, we use that the spherical twist $\TTT_{\calO_B(-1)}$ is a 
lift of the reflection, i.e.\ we use the sheaf $\TTT_{\calO_B(-1)}(\calO_C(-1))$ which
is defined by the short exact sequence
 $ 0 \to \calO_C(-1) \to \TTT_{\calO_B(-1)}(\calO_C(-1)) \to \calO_B(-1) \to 0 . $
This non-split extension is unique as a sheaf and a line bundle supported on $B\cup C$.
It follows immediately from this sequence and Lemma~\ref{LemSphericals} that the
$\Hom^\bullet$-groups are preserved. As a consequence of this, the intersection behaviour,
given by the negative of the Euler form on the category, is unchanged. 

We define the category $\calT$ as the smallest triangulated subcategory of $\DDD^b(X)$
containing the following objects:
\begin{description*}
\item[Case $a=2$] \hfill ($J_{3,0}, Z_{1,0}, Q_{2,0}$)
\begin{align*}
 \calT = \big\langle
       & \calO_{\E{1}{1}}(-1), \ldots, \calO_{\E{1}{\alpha_1-1}}(-1),
         \calO_{\E{2}{1}}(-1), \ldots, \calO_{\E{2}{\alpha_2-1}}(-1), \\
       & \TTT_{\calO_{\E{3}{1}}(-1)}(\calO_{\E{3}{2}}(-1)), 
         \calO_{\E{3}{3}}(-1), \ldots, \calO_{\E{3}{\alpha_3-1}}(-1), \\
%
%
       & \calO_{\E{}{\infty}}(-1), \calO_{\E{}{\infty}}, \calO_X, \calO_{\E{}{0}}
         \big\rangle
\end{align*}
\item[Case $a=5$] \hfill ($E_{20}, Z_{19}, Q_{18}$)
\begin{align*}
 \calT = \big\langle
       & \calO_{\E{1}{1}}(-1), \ldots, \calO_{\E{1}{\alpha_1-1}}(-1),
         \calO_{\E{2}{1}}(-1), \ldots, \calO_{\E{2}{\alpha_2-1}}(-1), \\
        & \TTT_{\calO_{\E{3}{1}}(-1)}(\calO_{\E{3}{2}}(-1)), 
         \calO_{E^3_3}(-1), \ldots , \calO_{E^3_{\alpha_3-1}}(-1), \\
%
       & \calO_{E_\infty}(-1), \calO_{E_\infty}, \calO_X,
         \calO_{F_1}, \calO_{F_2}(-1), \calO_{F_3}(-1), \calO_{F_4}(-1), \calO_{E_0}(-1)
         \big\rangle .
\end{align*}
\end{description*}

The Coxeter-Dynkin diagrams corresponding to these sets of generators for the 
singularities $Z_{1,0}$ and $E_{20}$ are depicted in Figure~\ref{FigZ10full} and 
Figure~\ref{FigE20full} respectively.

\begin{figure}
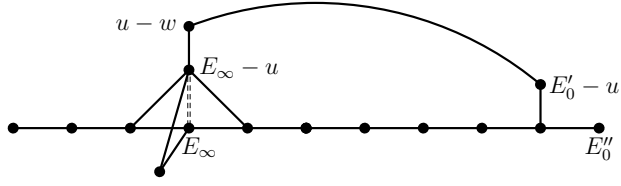

\centering
\includegraph{Z10full}
\caption{  \label{FigZ10full}
Coxeter-Dynkin diagram for $Z_{1,0}$.}
\end{figure}

\begin{figure}
\centering
\includegraph{E20full}
\caption{Coxeter-Dynkin diagram for $E_{20}$.}  \label{FigE20full}
\end{figure}

Summarising, we obtain the following theorem.

\begin{theorem}
Let $\calT$ be one of the triangulated categories associated above with a bimodal
singularity. Then the lattice $K(\calT)$, equipped with the negative Euler pairing,
is isomorphic to the Milnor lattice of the singularity  and the Coxeter-Dynkin 
diagram corresponding to the above system of generators of $\calT$ coincides with
the Coxeter-Dynkin diagram corresponding to a distinguished basis of vanishing 
cycles of the singularity.
\end{theorem}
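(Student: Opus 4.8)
The plan is to verify the theorem case by case, since there are only finitely many bimodal singularities (the 6+14 listed in Table~\ref{TabBi}), and the geometric data for each has already been assembled in Proposition~\ref{prop:config}. The work splits into two independent parts: (a) identifying the lattice $K(\calT)$ together with its negative Euler form, and (b) matching the resulting Coxeter-Dynkin diagram with the one of \cite{E83}. For part (a), the key point is that all the generators we have listed are spherical objects: the structure sheaves $\calO_C(-1)$ of $-2$-curves are spherical, line bundles on chains of $-2$-curves are spherical, $\calO_X$ is spherical because $X$ is K3, and the sheaf $\TTT_{\calO_B(-1)}(\calO_C(-1))$ obtained by the base change is spherical because spherical twists preserve sphericalness. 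Hence each generator $S$ gives a root $[S]\in K(\calT)$, and since $\calT$ is $2$-Calabi-Yau, the negative Euler pairing on $K(\calT)$ is symmetric and restricts to $-2$ on each of these roots. One then has to check that the generators form a $\ZZ$-basis of $K(\calT)$ — this follows from the description of the Chern character $\chern\colon\calT\hookrightarrow\DDD^b(X)\to H^*(X,\ZZ)$: the curves $\E{i}{j}$, $E_\infty$, the components of $E_0$, and (in the exceptional cases) the $F_\ell$ are linearly independent classes in $H^2(X,\ZZ)$, and adjoining the isotropic classes $u=-[\text{pt}]$ and $w$ with $\chern(\calO_X)=u-w$ extends this to a basis of the relevant sublattice of the Mukai lattice.

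Next I would compute the Gram matrix of the negative Euler form in this basis, using Lemma~\ref{LemSphericals}. Between two $\calO_C(-1)$'s one gets $0$ if the curves are disjoint and $1$ if they meet transversally; between $\calO_{E_\infty}(-1)$ and $\calO_{E_\infty}$ one gets $-\chi=-\dim\Hom^\bullet(\calO_{E_\infty},\calO_{E_\infty}(-1))\cdot(\text{sign})$, which by the lemma is $\CC^2[-2]$, giving entry $-2$, i.e.\ a dashed double edge; between $\calO_{E_\infty}$ and $\calO_X$ one gets $\Hom^\bullet(\calO_X,\calO_{E_\infty})=\CC$, a single edge; and so on. The base-change step is what makes the third arm come out right: since $\TTT_{\calO_B(-1)}$ preserves all $\Hom^\bullet$-groups, replacing $\calO_C(-1)$ by $\TTT_{\calO_B(-1)}(\calO_C(-1))$ and dropping $\calO_B(-1)$ shortens the arm by one while keeping it a chain, and the new vertex acquires exactly the right incidences with its neighbours and with $E_0$, as dictated by the reflection $[C]\mapsto[B]+[C]$ along the root $B$. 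In the $a=3$ and $a=5$ cases the curves $F_1$ (resp.\ $F_1,\dots,F_4$) are included precisely so that, after the base change and the shift $\calO_X[1]$, the extra vertices $\bullet_1\mbox{---}\cdots\mbox{---}\bullet_a$ appear with the connectivity prescribed by the rules in Section~\ref{Sect:bi}: $\bullet_1$ joined to the upper central vertex, $\bullet_3$ joined to the $\alpha_i-\beta_i-1$-th vertex of each arm. Comparing the resulting graph, arm by arm, with the $T(\alpha_1,\alpha_2,\alpha_3)$-plus-extension recipe of Table~\ref{TabCD} completes part (b), and since \cite{E83} shows that recipe yields a Coxeter-Dynkin diagram with respect to a distinguished basis of vanishing cycles of the Milnor lattice, the lattice $K(\calT)$ is isomorphic to the Milnor lattice and the diagrams coincide.

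The main obstacle is the bookkeeping in the base-change cases ($J_{3,0}$, $Z_{1,0}$, $Q_{2,0}$ with $a=2$, and $E_{20}$, $Z_{19}$, $Q_{18}$ with $a=5$): one must check that the superfluous curve $B$ inside the third branch is indeed incident to exactly two other curves of the configuration (so that the extension $0\to\calO_C(-1)\to\TTT_{\calO_B(-1)}(\calO_C(-1))\to\calO_B(-1)\to 0$ is a line bundle on $B\cup C$ and the simple picture applies), that $E_0$ meets the arm at the correct vertex before and after the twist, and that in the $a=5$ case the shift $\calO_X[1]$ and the particular line bundles $\calO_{F_1}$, $\calO_{F_\ell}(-1)$ produce the $E_8$-type tail $\bullet_1\mbox{---}\bullet_2\mbox{---}\bullet_3\mbox{---}\bullet_4\mbox{---}\bullet_5$ attached as in Section~\ref{Sect:bi} rather than some other orientation. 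All of this is a finite check against the figures (Figures~\ref{FigS16full},~\ref{FigZ10full},~\ref{FigE20full} and the analogous ones), but it is where errors would hide; everything else is a direct application of Lemmas~\ref{LemSphericals} and~\ref{LemC2}--\ref{LemC2:2} together with Proposition~\ref{prop:config} and the tables of \cite{E83}.
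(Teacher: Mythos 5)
Your proposal is correct and follows essentially the same route as the paper: the theorem there is obtained by ``summarising'' exactly the construction you describe --- spherical generators whose classes give roots, the K-group identified via the Chern character inside $H^*(X,\ZZ)$ with the hyperbolic extension by the isotropic classes $u$ and $w$, edge computations from Lemma~\ref{LemSphericals}, the spherical-twist base change in the $J_{3,0}$, $Z_{1,0}$, $Q_{2,0}$ and $a=5$ cases, and a case-by-case comparison with the diagrams of \cite{E83} built from Table~\ref{TabCD}. The finite bookkeeping you flag is precisely what the paper carries out (and illustrates in Figures~\ref{FigS16full}, \ref{FigZ10full}, \ref{FigE20full}), so no further ideas are needed.
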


\section{Coxeter elements}
Let $\calT$ be one of the above categories. A spherical object $D$ in $\calT$ gives rise to a spherical twist whose action on $(K(\calT), - \chi(-,-))$ is just the reflection $s_{[D]}$ along the class $[D]\in K(\calT)$.

\begin{corollary} Let $\calT$ be one of the triangulated categories associated above with a bimodal singularity. The Coxeter element corresponding to the above system of generators of $\calT$ corresponds to the monodromy operator of the singularity.
\end{corollary}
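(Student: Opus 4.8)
The plan is to derive the Corollary directly from the Theorem together with the classical identification of the monodromy of a weighted homogeneous singularity with the Coxeter element of a distinguished basis of vanishing cycles. First I would make precise what the Theorem delivers. Write $D_1,\dots,D_n$ for the generating objects of $\calT$ in the order in which they were listed, and $e_1,\dots,e_n$ for a distinguished basis of vanishing cycles of the Milnor lattice $L$. The Theorem provides a lattice isomorphism whose defining feature is that the Coxeter-Dynkin diagram of the ordered family of roots $([D_1],\dots,[D_n])$ in $(K(\calT),-\chi(-,-))$ equals the Coxeter-Dynkin diagram of $(e_1,\dots,e_n)$. Since the diagram records not only the numbers $|\langle e_i,e_j\rangle|$ but, through the dashing convention, also the signs of the $\langle e_i,e_j\rangle$, and since every root has self-intersection $-2$, the diagram pins down the entire Gram matrix of the ordered basis. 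Hence the $\ZZ$-linear map $\varphi\colon K(\calT)\to L$ determined by $[D_i]\mapsto e_i$ is a well-defined isometry of lattices carrying the ordered system of generating roots onto the ordered distinguished basis, and it realises the isomorphism asserted by the Theorem.

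Next I would transport Coxeter elements along $\varphi$. On the categorical side, each spherical twist $\TTT_{D_i}$ descends to the reflection $s_{[D_i]}$ of $(K(\calT),-\chi(-,-))$, because $\calT$ is $2$-Calabi--Yau (as recalled above); consequently the composition $\TTT_{D_1}\circ\cdots\circ\TTT_{D_n}$ descends to the Coxeter element $\tau_{\calT}=s_{[D_1]}\cdots s_{[D_n]}$ attached to the system of generators. For any isometry $\varphi$ and any root $e$ one has $\varphi\circ s_{e}\circ\varphi^{-1}=s_{\varphi(e)}$; applying this factor by factor gives
\[ \varphi\circ\tau_{\calT}\circ\varphi^{-1}=s_{e_1}\cdots s_{e_n}=\tau_L, \]
the Coxeter element of the distinguished basis of vanishing cycles.

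Finally I would invoke the classical fact, already recorded in the introduction, that for the singularities under consideration the Coxeter element $\tau_L$ of a distinguished basis of vanishing cycles \emph{is} the monodromy operator of the singularity on the Milnor lattice. Combined with the previous step, this shows that $\tau_{\calT}$ is carried by the lattice isomorphism of the Theorem onto the monodromy, which is exactly the statement of the Corollary. I do not anticipate a genuine difficulty here: the argument is pure bookkeeping on top of the Theorem and the elementary conjugation formula for reflections. The one point deserving care, and the only place where something could go wrong, is to confirm that the matching of Coxeter-Dynkin diagrams supplied by the Theorem is an \emph{ordered} matching --- that the numbering of the vertices on the singularity side (with the newly added vertices placed last, in their prescribed order) really corresponds to the order in which the generating objects of $\calT$ were listed --- since both Coxeter elements must be formed from their reflections in the same order for the conjugation identity above to hold verbatim.
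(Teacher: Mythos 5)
Your argument is correct and is essentially the argument the paper intends: the Theorem gives an ordered matching of the Coxeter-Dynkin diagrams (hence an isometry sending the generating roots to the distinguished basis), reflections conjugate under isometries, and the Coxeter element of a distinguished basis of vanishing cycles is the monodromy, as recalled in Section~2. Your closing caveat about the ordering is exactly the point the paper handles via its numbering convention (new vertices last, in their indicated order), matching the listed order of the generators of $\calT$.
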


\begin{remark}
Since the triangulated category $\calT$ is generated by 2-spherical objects,  there is a Coxeter functor, given by composing all the spherical twists of the spherical objects comprising the basis of $\calT$. This functor lifts the Coxeter element from an isometry of the lattice to an autoequivalence of $\calT$.
\end{remark}

If $\tau$ is the monodromy operator, then we consider the polynomial $\Delta(t)=\det(1-\tau^{-1}t)$ as  its characteristic polynomial, using a suitable normalization.

Let $f(x,y,z)$ be a non-degenerate invertible polynomial and let $(w_1,w_2,w_3;d')$ be the canonical weight system corresponding to $f(x,y,z)$.
The ring $R_f:=\CC[x,y,z]/(f)$ is a $\ZZ$-graded ring. Therefore, we can consider the decomposition of $R_f$
as a $\ZZ$-graded $\CC$-vector space:
\[
R_f:=\bigoplus_{k\in\ZZ_{\ge 0}} R_{f,k}, \quad R_{f,k}:=\left\{g\in R_f~\left|~
w_1x\frac{\partial g}{\partial x}+w_2y\frac{\partial g}{\partial y}
+w_3z\frac{\partial g}{\partial z}=k g\right.\right\}.
\]
The formal power series
\begin{equation}
p_f(t):=\sum_{k\ge 0} (\dim_\CC R_{f,k}) t^k
\end{equation}
is the {\em Poincar\'e series} of the $\ZZ$-graded coordinate ring $R_f$ with respect to the canonical system of weights $(w_1,w_2,w_3;d')$ attached to $f$. It is given by
\[ p_f(t) = \frac{(1-t^{d'})}{(1-t^{w_1})(1-t^{w_2})(1-t^{w_3})}. \]
Let $(\alpha_1, \alpha_2, \alpha_3)$ be the Dolgachev numbers of $f$ (see \cite{ET}). Consider the polynomial
\[ \Delta_0(t) = (1-t)^{-2} (1- t^{\alpha_1})(1- t^{\alpha_2})(1- t^{\alpha_3}). \]
The rational function
\[ \phi_f(t) := p_f(t) \Delta_0(t) \]
is called the {\em characteristic function of $f$}. From Table~10 and Table~11 of \cite{ET} we can derive the following theorem.

\begin{theorem} \label{theo:char}
Let $f(x,y,z)$ be a non-degenerate invertible polynomial and assume that the canonical system of weights attached to $f^T$ is reduced. Then $\phi_f(t)$ is the characteristic polynomial  of the monodromy operator of $f^T$.
\end{theorem}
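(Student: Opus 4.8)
The plan is to verify the claimed equality directly, by writing both the rational function $\phi_f(t)=p_f(t)\Delta_0(t)$ and the monodromy polynomial $\Delta(t)=\det(1-\tau^{-1}t)$ of $f^T$ as products of factors $(1-t^m)^{\pm 1}$ and matching them. For the monodromy side the essential input is the classical description of the monodromy of a weighted homogeneous polynomial with isolated singularity (Milnor--Orlik, equivalently via the Steenbrink spectrum): $\tau$ has finite order, and its eigenvalues --- hence $\Delta(t)$ --- are read off by a ``cyclotomic substitution'' from the graded Jacobian ring of $f^T$, whose Poincar\'e polynomial is $\prod_{i=1}^{3}\frac{1-t^{d^T-w_i^T}}{1-t^{w_i^T}}$, where $W_{f^T}=(w_1^T,w_2^T,w_3^T;d^T)$ is the canonical weight system of $f^T$. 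This is exactly where the hypothesis is used: that substitution produces the expected product of factors $1-t^m$ when $W_{f^T}$ is reduced, and one would otherwise have to correct for the finite cyclic symmetry $\ZZ_{c_{f^T}}$, which changes the answer.

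For the $\phi_f$ side I would start from the product formula for $p_f(t)$ in terms of the canonical weight system $W_f=(w_1,w_2,w_3;d')$ of $f$, together with the relation between $W_f$ and $W_{f^T}$ coming from transposing the exponent matrix $E$ of $f$: one has $d'=\det E=\det E^T$, while $(w_1,w_2,w_3)$ and $(w_1^T,w_2^T,w_3^T)$ are respectively the vectors of row sums and of column sums of the integer matrix $\det(E)\,E^{-1}$. Next I would invoke, as the content imported from Tables~10 and~11 of \cite{ET}, that the Dolgachev numbers $(\alpha_1,\alpha_2,\alpha_3)$ of $f$ appearing in $\Delta_0(t)=(1-t)^{-2}(1-t^{\alpha_1})(1-t^{\alpha_2})(1-t^{\alpha_3})$ are computed from $W_{f^T}$ by an explicit rule --- and equal the Gabrielov numbers of $f^T$. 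Substituting all of this turns $\phi_f(t)$ into an explicit rational expression in $w_1^T,w_2^T,w_3^T,d^T$ which, after the cyclotomic cancellations, is to be identified with the Milnor--Orlik output for $f^T$; in particular the poles of $p_f(t)$ at the roots of the $1-t^{w_i}$ must be absorbed by the zeros of $\Delta_0(t)$, in exactly the suitable normalisation of $\Delta(t)$.

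To carry out the matching I would organise it by the combinatorial type of $f$: every non-degenerate invertible polynomial in three variables is a Thom--Sebastiani join of ``atoms'' of Fermat, chain, or loop type (the Kreuzer--Skarke classification underlying \cite{ET}), the exponent matrix $E$ --- hence the canonical weight systems of $f$ and of $f^T$ --- decomposing into blocks accordingly, and the Milnor monodromy of a Thom--Sebastiani sum being the tensor product of the monodromies of the summands (so its characteristic polynomial is built from those of the atoms by the familiar join operation on eigenvalues). Hence it suffices to verify $\phi_f(t)=\Delta(t)$ for the finitely many atomic shapes and for the admissible ways of combining them into an isolated three-variable singularity; each such verification reduces to an elementary manipulation of greatest common divisors and of factors $1-t^m$, which is in effect the line-by-line comparison performed in Tables~10 and~11 of \cite{ET}.

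The step I expect to be the real obstacle is this last one: controlling the cyclotomic cancellations --- among $1-t^{d'}$, the $1-t^{w_i}$, the $1-t^{\alpha_i}$ and the powers of $1-t$ --- uniformly over all atom types and all their admissible combinations, and simultaneously pinning down the correct normalisation of $\Delta(t)$ so that the two expressions agree exactly rather than up to a cyclotomic factor. A second, more conceptual point requiring care is to ensure that the Milnor--Orlik step for $f^T$ is never applied with a non-reduced weight system in disguise: this is the single place where the hypothesis of the theorem is genuinely used, and it is what separates the cases in which $\phi_f(t)$ literally equals the monodromy polynomial from those in which the two differ.
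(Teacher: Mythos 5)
Your approach is essentially the paper's: the paper offers no argument beyond deriving the statement from the case-by-case data in Tables~10 and~11 of \cite{ET}, and your reduction to the finitely many atomic types (Fermat, chain, loop) of invertible polynomials in three variables --- with Milnor--Orlik on the monodromy side of $f^T$ and the transposed canonical weight systems on the Poincar\'e-series side of $f$ --- amounts to exactly that tabulated, line-by-line comparison. The cyclotomic bookkeeping you single out as the remaining obstacle is precisely what those tables record, so no new idea beyond the paper's route is missing.
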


If $f^T$ is the invertible polynomial in Table~\ref{TabBi} corresponding to one
of the 14 exceptional bimodal singularities, then its canonical system of weights
is reduced. 
Therefore we can apply Theorem~\ref{theo:char} in these cases. Note that in these
cases $\Delta_0(t)$ is the characteristic polynomial of the Coxeter element 
corresponding to the subset of generators of $\calT$ with support on the preimage
of $C_\infty$ under the resolution $\pi\colon X \to Y$ with the Coxeter-Dynkin 
diagram given by Figure~\ref{FigTpqr}. Then we get the following corollary of 
Theorem~\ref{theo:char}:

\begin{corollary} \label{cor:Poinc}
Let $f(x,y,z)$ be an invertible polynomial which is the Berglund-H\"ubsch
transpose of an invertible polynomial with a reduced canonical system of weights
defining an exceptional  bimodal singularity. Then
\[ p_f(t) = \frac{\Delta(t)}{\Delta_0(t)} \]
where $\Delta(t)$ is the characteristic polynomial of the Coxeter element
corresponding to the above system of generators of $\calT$.
\end{corollary}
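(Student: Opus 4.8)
The plan is to obtain the identity formally, by combining Theorem~\ref{theo:char} with two earlier results: the Theorem of Section~5, which provides an isometry between $(K(\calT),-\chi)$ with its distinguished generators and the Milnor lattice of the bimodal singularity with a distinguished basis of vanishing cycles, and the first Corollary of Section~6, which identifies the Coxeter element of $\calT$ with the monodromy operator. The object they all share is the characteristic polynomial of the monodromy of $f^T$. First I would observe that the hypothesis places $f^T$ --- which, by involutivity of the Berglund-H\"ubsch transpose, is the given invertible polynomial, defining an exceptional bimodal singularity --- in the setting of Theorem~\ref{theo:char}, since its canonical weight system is reduced; that theorem then says precisely that the characteristic function $\phi_f(t)=p_f(t)\Delta_0(t)$ is the characteristic polynomial of the monodromy operator of $f^T$.

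On the other hand, the Theorem of Section~5 carries the distinguished system of generators of $\calT$ to a distinguished basis of vanishing cycles in the Milnor lattice of $f^T$, hence (by the first Corollary of Section~6) carries the Coxeter element $\tau$ of $\calT$ to the monodromy operator of $f^T$; conjugate lattice automorphisms have the same characteristic polynomial, so $\Delta(t)=\det(1-\tau^{-1}t)$ is again the characteristic polynomial of the monodromy of $f^T$. Comparing the two expressions gives $\Delta(t)=\phi_f(t)=p_f(t)\Delta_0(t)$, and dividing by the nonzero polynomial $\Delta_0(t)$ yields $p_f(t)=\Delta(t)/\Delta_0(t)$. In the three cases at issue ($E_{20}$, $Z_{19}$, $Q_{18}$) one can additionally confirm the formula by computing $p_f(t)=(1-t^{d'})/\prod_{i}(1-t^{w_i})$ from the canonical weight system of $f$.

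The only step that is not purely formal --- and the place where care is needed --- is matching the normalizations implicit in the equality $\Delta(t)=\phi_f(t)$: here $\Delta(t)$ is defined as $\det(1-\tau^{-1}t)$ ``using a suitable normalization'', whereas $\phi_f(t)$ is declared to be the characteristic polynomial of the monodromy in Theorem~\ref{theo:char}, which in turn is read off from Tables~10 and~11 of \cite{ET}. Since the monodromy of an isolated singularity is quasi-unipotent, its characteristic polynomial is a product of cyclotomic polynomials and hence self-reciprocal up to sign, so passing between $\det(1-\tau t)$ and $\det(1-\tau^{-1}t)$ only replaces $t$ by $t^{-1}$ up to a power of $t$; the comparison therefore reduces to a degree count --- both polynomials have degree equal to the Milnor number, which is the number of generators listed in the definition of $\calT$ in each case --- together with fixing the reciprocation and the constant term. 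Once this bookkeeping against \cite{ET} is done, the corollary holds with no further case analysis beyond what was already needed to construct $\calT$.
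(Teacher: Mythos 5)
Your proposal is correct and is essentially the paper's own argument: the corollary is obtained by combining Theorem~\ref{theo:char} (applicable because the canonical weight system of $f^T$ is reduced) with the identification, via the Theorem of Section~5 and the first Corollary of Section~6, of the Coxeter element of the chosen generators of $\calT$ with the monodromy operator of $f^T$, giving $\Delta(t)=\phi_f(t)=p_f(t)\Delta_0(t)$. One inessential slip: the corollary covers the duals of all 14 exceptional bimodal singularities, not only the three $a=5$ cases $E_{20}$, $Z_{19}$, $Q_{18}$ named in your optional numerical check, but your formal derivation applies uniformly to all of them.
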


A similar result holds for Fuchsian singularities \cite{EP,EP2}. There we gave
a geometric proof of this fact. It is an open problem to derive a similar proof
for Corollary~\ref{cor:Poinc}.

\begin{remark} Since the canonical systems of weights of $Z_{17}$ and $W_{17}$
are reduced, we can apply \cite[Theorem~22]{ET} and obtain that $\phi_f(t)$ is
the characteristic polynomial of an operator $\tau$ such that $\tau^2$ is the 
Coxeter element corresponding to the above system of generators of $\calT$.  
It can be checked that a similar result holds for $U_{1,0}$. For the remaining
quadrilateral singularities, there is no such relation between $\phi_f(t)$ and
the Coxeter element.
\end{remark}

\noindent Leibniz Universit\"{a}t Hannover, Institut f\"{u}r Algebraische Geometrie,\\
Postfach 6009, D-30060 Hannover, Germany \\
E-mail: ebeling@math.uni-hannover.de\\
E-mail: ploog@math.uni-hannover.de

\end{document}